\numberwithin{equation}{section}
\theoremstyle{definition}
\newtheorem{thm}[equation]{Theorem} 
\theoremstyle{definition}
\newtheorem{definition}[equation]{Definition}
\newtheorem{prop}[equation]{Proposition} 
\newtheorem{cor}[equation]{Corollary}
\newtheorem{lemma}[equation]{Lemma}
\newcommand{\ovl}[1]{\overline{#1}}
\newcommand{\pair}[1]{\left\langle #1\right\rangle}
\newcommand{\bb}[1]{\mathbb{#1}}
\newcommand{\V}{\text{Vol}}
\newcommand{\D}{\Delta}
\newcommand{\n}{\nabla}
\newcommand{\Set}{\mathcal S}
\newcommand{\sbst}{\subseteq}
\newcommand{\h}{\textbf H}
\newcommand{\spt}{\text{spt}}
\newcommand{\Sing}{\text{Sing}}
\newcommand{\R}{\text{Reg}}
\newcommand{\bd}{\partial}
\newcommand{\Cloc}{C^\infty_{\text{loc}}}
\newcommand{\N}{\textbf N}
\newcommand{\B}{\textbf B}
\title{Compactness and Rigidity of self-shrinking surfaces}
\author{Tang-Kai Lee}
\address{MIT, Dept. of Math., 77 Massachusetts Avenue, Cambridge, MA 02139-4307}
\email{tangkai@mit.edu}
\date{\today}
\begin{document}
	\begin{abstract}
	The entropy functional introduced by Colding and Minicozzi plays a fundamental role in the analysis of mean curvature flow. 
	However, unlike the hypersurface case, relatively little about the entropy is known in the higher codimensional case. 
	In this note, we use measure-theoretic techniques and rigidity results for self-shrinkers to prove a compactness theorem for a family of self-shrinking surfaces with low entropy. 
	Based on this, we prove the existence of entropy minimizers among self-shrinking surfaces and improve some rigidity results.
	\end{abstract}
	\maketitle
\section{{\bf Introduction}}
For an $n$-dimensional submanifold $\Sigma$ in $\bb R^N$ ($N>n$), the \textit{$F$-functional} (or the Gaussian area) of $\Sigma$ is defined by
\begin{equation}\label{Ffunc}
F(\Sigma) :=  (4\pi)^{-\frac n2}\int_\Sigma e^{-\frac{|x|^2}{4}}dx,
\end{equation}
and its \textit{entropy} is given by
\begin{equation}
\lambda(\Sigma):=\sup_{x_0\in\bb R^N,t_0>0} F(t_0\Sigma+x_0).
\end{equation}
The $F$-functional has been studied in various places (for example, see \cite{AIC}, \cite{H90}, and \cite{I94}). 
In \cite{CM12}, this functional was systematically investigated by Colding and Minicozzi to study the stability of the singularities of mean curvature flow (MCF) for hypersurfaces (i.e., when $N=n+1$). 
The critical points of the $F$-functional are called \textit{self-shrinkers}. 
Self-shrinkers not only lead to the simplest examples of MCF (cf. Proposition \ref{sfchar}) but also provide the singularity models for the flow (cf. \cite{H90}, \cite{I95} and \cite{W97}). 
As a result, the analyses of self-shrinkers, and thus those of the $F$-functional and the entropy functional, are crucial in the study of MCF.

One of the central topics in the study of geometric objects is the compactness problem. 
There are lots of compactness results for self-shrinkers and, more classically, minimal surfaces, such as \cite{CMcpt} and \cite{CS}. 
However, most of the compactness theorems of self-shrinkers are valid for hypersurface self-shrinkers, that is, self-shrinkers of codimension one. 
In this note, we focus on two-dimensional self-shrinkers, or self-shrinking surfaces, of any codimension. 
To this end, we consider the family $\Set_N$ of all complete, non-flat, and smooth embedded self-shrinking surfaces without boundary in $\bb R^N$ ($N>2$). 
Our main compactness theorem is stated as follows. 

\begin{thm}\label{cpt}
	Given any $\delta<2$ and $N>2,$ the space $\{\Sigma\in\Set_N:\lambda(\Sigma)\le\delta\}$ is compact in the $\Cloc$-topology.
\end{thm}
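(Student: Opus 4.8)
The plan is to pass to a limit in the space of integral varifolds and then upgrade the convergence to $\Cloc$, using the self-shrinker equation for regularity and the gap $\delta<2$ to rule out multiplicity and singularities. \emph{Uniform local bounds.} First I would note that an entropy bound controls Gaussian density ratios: for $\Sigma\in\Set_N$ with $\lambda(\Sigma)\le\delta$ and any $x_0\in\bb R^N$, $r>0$, the ratio $(4\pi r^2)^{-1}\int_\Sigma e^{-|x-x_0|^2/4r^2}\,d\mathcal H^2$ is an $F$-area of a rescaled, translated copy of $\Sigma$, hence $\le\lambda(\Sigma)\le\delta$; letting $r\to0$ gives the pointwise bound $\Theta_\Sigma(x_0)\le\delta$, and a standard argument then gives the Euclidean area bound $\mathcal H^2(\Sigma\cap B_R(x_0))\le CR^2\delta$ for all $x_0,R$. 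I would also record that the family cannot escape to infinity: the self-shrinker equation $\h=-\tfrac12 x^\perp$ gives $\dl|x|^2=4-|x|^2$ on $\Sigma$, and integrating this against the Gaussian weight (legitimate by the polynomial volume growth just obtained, so that $\mathrm{div}_\Sigma(e^{-|x|^2/4}\n|x|^2)$ is integrable with vanishing integral) forces $\inf_\Sigma|x|^2\le4$, i.e. every member of the family meets $\ovl{B_2}$.

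\emph{Varifold limit.} Given a sequence $\Sigma_i$ in the family, the associated multiplicity-one integral varifolds $V_i$ have locally uniformly bounded mass, and $\|\delta V_i\|(B_R)\le\tfrac12 R\,\mathcal H^2(\Sigma_i\cap B_R)$ is locally uniformly bounded, so Allard's integral compactness theorem yields a subsequence converging, as varifolds, to an integral varifold $V$ of locally bounded first variation still satisfying $\h=-\tfrac12 x^\perp$ in the weak sense. Combining the non-escape property, the monotonicity formula (which gives $\Theta_V\ge1$ on $\spt V$), and the mass bounds shows that $V$ is nontrivial, indeed $\spt V\cap\ovl{B_3}\neq\emptyset$.

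\emph{Regularity of the limit — the crux.} This is where $\delta<2$ enters. The uniform area growth prevents loss of mass in the Gaussian weight, so the entropy is lower semicontinuous along the sequence and hence $\Theta_V(x_0)\le\lambda(V)\le\delta<2$ for every $x_0$. Consequently: (i) $V$ cannot carry multiplicity $\ge2$ anywhere, nor can two regular sheets touch, since either would force density $\ge2$; (ii) at every point of $\spt V$ the tangent cone is a $2$-dimensional stationary integral cone of density $<2$. Classifying such cones and invoking the rigidity results for self-shrinkers — the key point being that a $2$-dimensional self-shrinker cannot carry an isolated conical singularity with entropy below $2$ — gives $\Sing(V)=\emptyset$, so $V$ is a smooth, embedded, multiplicity-one self-shrinking surface, and $\Sigma_\infty:=\spt V$ is then a properly embedded, complete, boundaryless self-shrinker with $\lambda(\Sigma_\infty)\le\delta$.

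\emph{Smooth convergence and non-flatness.} Allard's regularity theorem now upgrades the varifold convergence to local $C^{1,\alpha}$ graphical convergence over $\Sigma_\infty$, and since $\h=-\tfrac12 x^\perp$ is a second-order quasilinear elliptic system, Schauder estimates and bootstrapping promote this to $\Cloc$ convergence. Finally, $\Sigma_\infty$ is non-flat: were it a plane, the $\Sigma_i$ would be graphs over it with second fundamental form tending to zero on every ball, contradicting the Bernstein-type rigidity for self-shrinkers; hence $\Sigma_\infty\in\Set_N$, and the family is $\Cloc$-compact. I expect the third step to be the main obstacle — controlling the singular set of the limit varifold and excluding triple-junction-type degenerations from the single bound $\delta<2$ is precisely what forces one to combine measure-theoretic compactness with rigidity theorems for self-shrinkers.
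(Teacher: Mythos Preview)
Your overall architecture---pass to an integral varifold limit, upgrade to smooth convergence via Allard, then rule out flat limits---matches the paper's, and your uniform bounds and the non-escape argument are fine (the paper uses the maximum principle on $\Delta|x|^2$ rather than your integration of $\dl|x|^2$, but either works). Two steps, however, are genuinely incomplete.

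\textbf{Regularity of the limit.} You correctly isolate the crux and even name the enemy: triple junctions. But the density bound $\Theta_V<2$ by itself does \emph{not} exclude them---three half-planes meeting along a line form a stationary integral $2$-cone of density $3/2$, well below $2$. Your sentence ``classifying such cones and invoking the rigidity results for self-shrinkers'' does not point to any mechanism that kills this configuration; rigidity theorems for self-shrinkers concern global structure, not the local tangent-cone picture. The paper's missing ingredient is White's theorem on mod $2$ flat chains: since each $\Sigma_i$ is smooth and closed, $\partial[\Sigma_i]=0$, and White shows this persists under varifold convergence (with locally bounded first variation), so $\partial[V]=0$, and likewise for any iterated tangent cone. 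A triple junction has nonzero mod $2$ boundary, so it is ruled out. After a Federer dimension reduction, the only $1$-dimensional stationary cones with entropy below $2$ and vanishing mod $2$ boundary are lines, which forces every tangent cone of $V$ to be a multiplicity-one plane.

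\textbf{Non-flatness.} Your appeal to ``Bernstein-type rigidity for self-shrinkers'' is too vague in higher codimension: the smooth convergence only gives $|A|\to0$ on compact sets, and there is no off-the-shelf higher-codimension statement that turns local graphical smallness over a plane into global flatness. The paper proceeds in two stages: first, Colding--Minicozzi's rigidity (their Corollary~6.21) says a self-shrinker that is $C^1$-close to a plane on a sufficiently large ball must lie in a $3$-dimensional affine subspace; second, now in codimension one, Brakke's gap theorem (or the Bernstein--Wang/Guang--Zhu alternatives) forces $\Sigma_i$ to be a plane, contradicting $\Sigma_i\in\Set_N$. Without the first reduction to codimension one, the gap theorem you need is precisely Corollary~\ref{cor1} of the paper---which is a \emph{consequence} of the compactness you are trying to prove, so invoking it would be circular.
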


It is worth mentioning that there are plenty of example of self-shrinking surfaces, even in the hypersurface case.
In fact, unlike the one-dimensional case, where all the self-shrinking curves have been classified by Abresch and Langer \cite{AL}, it seems impossible to classify all higher dimensional self-shrinkers.
For example, any minimal submanifold of  the $n$-spheres $S^n(\sqrt{2n})$ with radius $\sqrt{2n}$ is a self-shrinker, and when $n$ is large, the $n$-sphere contains many minimal surfaces.

In the proof of Theorem \ref{cpt}, the fact that the entropy bound is strictly less than $2$ is crucial. 
This assumption prevents the situation of higher multiplicities. 
It is interesting to see if this bound could be relaxed. 
The proof of Theorem \ref{cpt} makes use of techniques in geometric measure theory, along with some rigidity results for self-shrinkers. 
We will briefly describe the proof strategies in Section \ref{1.3}.

It is expected that compactness theorems will lead to many applications. 
In this note, we use this compactness theorem to study the existence of entropy minimizers and to improve some rigidity results for self-shrinking surfaces.

\subsection{Entropy Minimizers} 
In \cite{CIMW}, Colding, Ilmanen, Minicozzi and White proved that the $n$-spheres $S^n(\sqrt{2n})$ with radius $\sqrt{2n}$ have the least entropies among all closed smooth $n$-dimensional self-shrinkers in $\bb R^{n+1}.$ 
They then conjectured that $S^n(\sqrt{2n})$ minimizes the entropy among all closed smooth hypersurfaces, which was later settled by Berstein-Wang \cite{BW} when $2\le n\le 6$ and Zhu \cite{Z} for all dimension $n\ge 2.$ 
We remark that when $n=1,$ the statement directly follows from earlier results of Gage-Hamilton \cite{GH} and Grayson \cite{G} since the entropy is monotone along the MCF.

A further natural question is whether a similar statement on entropy minimizers holds when the codimensions of self-shrinkers are higher. 
The analysis of higher codimensional MCF is a much more difficult problem than that of hypersurface MCF. 
The reasons for this scenario include the complexity of the second fundamental form and the lack of certain maximum principles. 
Consequently, many techniques and results for hypersurface MCF are hardly generalized to higher codimensional MCF.

In \cite{CM20}, Colding and Minicozzi conjectured that the only $n$-dimensional self-shrinkers of arbitrary codimensions and with entropies close to $\lambda(S^1)$ are round cylinders $S^k(\sqrt{2k})\times\bb R^{n-k}$ when $n\le 4.$ 
If this conjecture is true, then based on the codimension bounds proven in \cite{CM20}, it will imply that any ancient solution $M_t^n$ to the MCF with $n\le 4$ is a hypersurface MCF if its entropy is close to $\lambda(S^1).$ 
Note that the conjecture holds when $n=1$ since the uniqueness of ODE implies all self-shrinking curves are planar.

The first application of the compactness theorem is about the existence of entropy minimizers in the higher codimensional setting. 
To be precise, we have the following result.

\begin{thm}\label{exist}
	For any $N>2,$ there is a complete, non-flat, and smooth embedded self-shrinking surface $\Sigma$ in $\bb R^N$ without boundary such that
	$\lambda(\Sigma)\le \lambda(M)$
	for all $M\in\Set_N.$
\end{thm}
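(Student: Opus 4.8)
The plan is to run the direct method in the calculus of variations, using Theorem \ref{cpt} as the compactness ingredient. First I would show that the infimum
$$
\lambda_0 := \inf_{M\in\Set_N}\lambda(M)
$$
is finite and, more importantly, strictly less than $2$. Finiteness is immediate since $\lambda\ge 1$ on any non-flat self-shrinker (the entropy of $\bb R^n$ is $1$, and any non-flat self-shrinker has entropy $>1$ by Colding–Minicozzi-type monotonicity). For the strict bound $\lambda_0<2$, I would exhibit a single element of $\Set_N$ with entropy below $2$: the cylinder $S^1(\sqrt 2)\times\bb R\subset\bb R^3\subset\bb R^N$ is a complete, non-flat, smooth embedded self-shrinking surface, and
$$
\lambda\bigl(S^1(\sqrt 2)\times\bb R\bigr)=\lambda\bigl(S^1(\sqrt 2)\bigr)=\sqrt{2\pi/e}\approx 1.52<2.
$$
(Alternatively Abresch–Langer curves crossed with $\bb R$, but the round cylinder is cleanest.) Thus $\lambda_0\in[1,\sqrt{2\pi/e}\,]$.

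Next I would take a minimizing sequence $\Sigma_j\in\Set_N$ with $\lambda(\Sigma_j)\to\lambda_0$. For $j$ large we have $\lambda(\Sigma_j)\le \delta$ for some fixed $\delta<2$ (say $\delta=(\lambda_0+2)/2<2$), so by Theorem \ref{cpt} a subsequence converges in $C^\infty_{\mathrm{loc}}$ to some $\Sigma_\infty\in\Set_N$. In particular $\Sigma_\infty$ is again complete, non-flat, smooth, embedded, and without boundary, so it is a legitimate competitor in $\Set_N$; this is exactly the content of Theorem \ref{cpt} — the limit stays in the class — and it is where the $<2$ entropy bound (ruling out multiplicity and degeneration) does its work.

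The remaining step is lower semicontinuity: $\lambda(\Sigma_\infty)\le\liminf_j\lambda(\Sigma_j)=\lambda_0$, which combined with $\lambda(\Sigma_\infty)\ge\lambda_0$ gives equality and finishes the proof. To see this, fix any $x_0\in\bb R^N$ and $t_0>0$; then
$$
F(t_0\Sigma_\infty+x_0)=(4\pi)^{-1}\int_{t_0\Sigma_\infty+x_0}e^{-|x|^2/4}\,dx
=\lim_{j\to\infty}F(t_0\Sigma_j+x_0)\le\limsup_{j\to\infty}\lambda(\Sigma_j)=\lambda_0,
$$
where the middle equality uses $C^\infty_{\mathrm{loc}}$ convergence together with uniform area bounds on compact sets (again a consequence of the entropy bound, via monotonicity/the Gaussian density) to pass the Gaussian integral to the limit — the tails are controlled uniformly by the entropy bound $\le\delta$, so a standard truncation argument applies. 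Taking the supremum over $x_0,t_0$ yields $\lambda(\Sigma_\infty)\le\lambda_0$. The main obstacle is this last interchange of limit and (noncompact) Gaussian integral: one must rule out loss of mass to infinity in the $\Sigma_j$, which is precisely why the uniform entropy bound — equivalently, uniform bounds on the Gaussian-weighted area of $\Sigma_j\cap(B_R\setminus B_r)$ — is essential, and it is the same mechanism that underlies the compactness in Theorem \ref{cpt}.
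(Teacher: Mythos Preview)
Your argument is correct and follows essentially the same route as the paper: exhibit a competitor (you use $S^1(\sqrt 2)\times\bb R$, the paper implicitly uses $S^2(2)$ via the choice $\delta\in(\lambda(S^2),2)$) to see $\lambda_0<2$, take a minimizing sequence, apply Theorem \ref{cpt}, and use lower semicontinuity of $\lambda$ under the convergence. The paper is terser about the last step (it has already recorded $\lambda(\Sigma)\le\liminf_i\lambda(\Sigma_i)$ as \eqref{lam<1.5} during the proof of Proposition \ref{smlim}), whereas you spell out the tail-truncation justification, but the substance is the same.
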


This theorem, in particular, also implies Brakke's gap theorem \cite{B78} in a concrete manner when the dimension of the self-shrinker is two. 
In other words, we established the following corollary. 

\begin{cor}\label{cor1}
  There exists $\varepsilon=\varepsilon(N)$ such that if $\Sigma$ is a smooth complete self-shrinking surface in $\bb R^{N}$ with $\lambda(\Sigma)\le 1+\varepsilon,$ then $\Sigma$ is flat.
\end{cor}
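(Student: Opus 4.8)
The plan is to convert Theorem~\ref{exist} into a genuine entropy gap just above the value $1$. Write
\[
\lambda_0:=\inf\{\lambda(M):M\in\Set_N\},
\]
which by Theorem~\ref{exist} is attained by some $\Sigma_0\in\Set_N$, so $\lambda_0=\lambda(\Sigma_0)$. First I would locate $\lambda_0$ between two explicit values. Taking $t_0=1$ and $x_0=0$ gives $\lambda(\Sigma_0)\ge F(\Sigma_0)$; and $F(\Sigma_0)$ is Huisken's (here constant) monotone quantity along the self-similarly shrinking flow $t\mapsto\sqrt{-t}\,\Sigma_0$, which dominates the Gaussian density $\Theta$ of that flow at the space-time origin, and $\Theta\ge1$. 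Hence $\lambda_0\ge1$. On the other side, the shrinking cylinder $S^1(\sqrt2)\times\bb R$ belongs to $\Set_N$, so $\lambda_0\le\lambda(S^1(\sqrt2)\times\bb R)=\lambda(S^1)<2$.

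The crucial step is to promote $\lambda_0\ge1$ to the strict inequality $\lambda_0>1$. If instead $\lambda_0=1$, then the chain above forces $\Theta=1$ at the space-time origin of the flow generated by $\Sigma_0$; by White's local regularity theorem this flow is then a smooth, multiplicity-one flow near the origin, and self-similarity identifies $\Sigma_0$ with a $2$-plane through the origin. This contradicts the definition of $\Set_N$, which contains no flat surface. (Equivalently, one may just invoke the standard rigidity statement that a self-shrinker of entropy $1$ must be flat.) Therefore $\lambda_0>1$.

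With the gap established I would set $\varepsilon=\varepsilon(N):=\tfrac12(\lambda_0-1)$, which satisfies $0<\varepsilon<1$ by the bounds above, and let $\Sigma$ be any smooth complete self-shrinking surface in $\bb R^N$ with $\lambda(\Sigma)\le1+\varepsilon$. Suppose toward a contradiction that $\Sigma$ is not flat. Since $\lambda(\Sigma)<2$, the surface $\Sigma$ has no self-intersection (a point lying on two sheets would carry Gaussian density $\ge2$, whence $\lambda(\Sigma)\ge2$), so $\Sigma$ is embedded; being complete, non-flat, smooth, embedded, and boundaryless, it lies in $\Set_N$. But then $1+\varepsilon\ge\lambda(\Sigma)\ge\lambda_0=1+2\varepsilon$, which is absurd. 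Hence $\Sigma$ is flat, which is the assertion of the corollary.

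The only step carrying real content is the strict inequality $\lambda_0>1$, and its one non-formal ingredient is the equality case of the entropy lower bound $\lambda\ge1$ for self-shrinkers. Everything else is bookkeeping: checking that a complete self-shrinking surface of entropy less than $2$ belongs to the class $\Set_N$ on which Theorem~\ref{exist} operates, and choosing $\varepsilon$ strictly inside the gap $(1,\lambda_0)$ so that the minimality of $\lambda_0$ yields a strict contradiction from $\lambda(\Sigma)\le1+\varepsilon$.
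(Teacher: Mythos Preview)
Your argument is correct and is exactly the derivation the paper has in mind: the paper does not spell out a separate proof of Corollary~\ref{cor1} but simply records it as a consequence of Theorem~\ref{exist}, and what you wrote is the natural way to cash that implication in. The only substantive step is indeed $\lambda_0>1$, and your justification via the equality case of Huisken monotonicity (or White's regularity) is standard; the paper would regard this as part of the folklore that ``entropy~$1$ forces flatness,'' which is precisely the parenthetical alternative you offer.
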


When $N=3,$ \cite{CIMW} says that the $2$-sphere $S^2(2)$ with radius $2$ is the unique entropy minimizer among closed self-shrinkers. 
At this moment, it is still not clear if $S^2(2)$ minimizes the entropy functional in $\Set_N$ for general $N>2.$ 
However, using the codimension bound in terms of the entropies of self-shrinkers established in \cite[Corollary 0.8]{CM20}, we can immediately obtain that any minimizer $\Sigma$ in Theorem \ref{exist} is contained in a Euclidean subspace of controlled dimension. 
This also tells us that the constant $\varepsilon(N)$ in Corollary \ref{cor1} does not depend on $N$ when $N$ is large enough.

\begin{thm}\label{codbd}
	Let $\Sigma$ minimize the entropy functional among all surfaces in $\Set_N.$ 
	Then $\Sigma$ is contained in a Euclidean subspace of dimension bounded by a constant $C$ independent of $N.$ 
	In particular, there exists some $N_0>0$ such that any entropy minimizer of $\Set_{N_0}$ also minimizes the entropy functional among self-shrinkers in $\Set_{N}$ for all $N\ge N_0.$
\end{thm}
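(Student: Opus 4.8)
The plan is to bound the entropy of an arbitrary entropy minimizer by an absolute constant, independent of the ambient dimension $N$, and then to invoke the codimension estimate of Colding--Minicozzi. First I would fix a universal competitor. The round sphere $S^2(2)\subset\bb R^3$ is a compact, non-flat, smooth embedded self-shrinker without boundary, and under the standard inclusion $\bb R^3\hookrightarrow\bb R^N$ it remains a self-shrinker: since $\bb R^3$ is totally geodesic in $\bb R^N$ the mean curvature vector is unchanged, and since the position vector of $S^2(2)$ already lies in $\bb R^3$ its normal part is unchanged as well. Hence $S^2(2)\in\Set_N$ for every $N>2$. Moreover the Gaussian area, and therefore the entropy, of a submanifold contained in a linear subspace is insensitive to the ambient dimension: translating in directions orthogonal to $\bb R^3$ only increases $|x|^2$ pointwise along the surface, so the supremum defining $\lambda$ is already attained among translations within $\bb R^3$, while dilations do not see the ambient space. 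Thus $\lambda_{\bb R^N}(S^2(2))=\lambda_{\bb R^3}(S^2(2))$, a fixed finite constant (equal to $4/e$ by \cite{CIMW}) for all $N$.

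Consequently, if $\Sigma$ minimizes the entropy over $\Set_N$, then $\lambda(\Sigma)\le\lambda(S^2(2))=4/e$, a bound independent of $N$. I would then apply the codimension bound of \cite{CM20} (corollary 0.8): there is a function $N(\Lambda,n)$ such that any $n$-dimensional self-shrinker with entropy at most $\Lambda$ is, after a rigid motion, contained in $\bb R^{N(\Lambda,n)}$. Taking $n=2$ and $\Lambda=4/e$ shows that $\Sigma$ lies in a Euclidean subspace of dimension at most $C:=N(4/e,2)$, an absolute constant. This is the first assertion.

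For the second assertion, set $N_0:=\max\{C,3\}$ and fix $N\ge N_0$. Let $\Sigma_0$ be an entropy minimizer of $\Set_{N_0}$ and $\Sigma_N$ an entropy minimizer of $\Set_N$; both exist by theorem \ref{exist}, and $\Set_{N_0}$ is nonempty since it contains $S^2(2)$. By the first part applied in $\bb R^N$, after a rotation $\Sigma_N$ is contained in $\bb R^C\subseteq\bb R^{N_0}\subseteq\bb R^N$, so $\Sigma_N\in\Set_{N_0}$; similarly $\Sigma_0\in\Set_N$, with the same entropy computed in either ambient space. Combining the dimension-invariance of the entropy with the two minimizing properties gives
\begin{equation*}
\lambda_{\bb R^N}(\Sigma_0)=\lambda_{\bb R^{N_0}}(\Sigma_0)\le\lambda_{\bb R^{N_0}}(\Sigma_N)=\lambda_{\bb R^N}(\Sigma_N)\le\lambda_{\bb R^N}(M)
\end{equation*}
for every $M\in\Set_N$. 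Hence $\Sigma_0$, regarded in $\bb R^N$, also minimizes the entropy over $\Set_N$.

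The only points requiring attention are the precise form of the codimension estimate quoted from \cite{CM20} and the verification that the entropy is unchanged under the inclusion of a lower-dimensional Euclidean subspace; I expect both to be straightforward, so there is no serious obstacle here. I also note that, apart from the existence of minimizers provided by theorem \ref{exist}, the compactness theorem \ref{cpt} is not needed for this argument.
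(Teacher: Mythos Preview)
Your proposal is correct and follows exactly the route the paper indicates: bound the minimizer's entropy by the universal constant $\lambda(S^2)=4/e$ via the sphere competitor, then invoke the codimension estimate of \cite{CM20} (corollary 0.8). The paper does not spell out a proof beyond that one-line remark preceding the statement, so your write-up in fact supplies the details the paper omits, including the clean chain of inequalities for the second assertion.
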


\subsection{Rigidity of Self-shrinkers} The second application of the compactness result is to improve some rigidity properties of self-shrinking surfaces. 
Rigidity means that the object is isolated in the space of solutions modulo some specific geometric symmetries of the equations. 
The rigidity properties we will focus on in this note are those similar to the ``strong rigidity'' established in \cite{CIM} by Colding, Ilmanen and Minicozzi. 
They showed that any hypersurface self-shrinker that is close to a round cylinder must also be a round cylinder.
Later, Colding and Minicozzi extended the result to the higher codimensional case \cite{CM20}. 
Strong rigidity of other self-shrinkers are also investigated, such as Abresch-Langer cylinders (\cite{Z21}), Clifford tori (\cite{ELS} and \cite{SZ}), and even Ricci shrinkers (\cite{CM21}).

In this note, we establish strong rigidity properties based on different characterizations of round cylinders. 
Essentially, we will use the gap theorems in terms of the second fundamental form proven in \cite{CL} and \cite{CXZ} in the first result. 
To state our theorems, for a submanifold $\Sigma,$ we let $\h$ be its mean curvature vector, $A$ be its second fundamental form and $\mathring A$ be its traceless second fundamental form. 
Also, we let $B_R$ be the open ball of radius $R$ centered at the origin.

\begin{thm}\label{rigA}
Given $N>2,$ $\delta<2,$ and $\eta>0,$ there exist constants $R_1=R_1(N,\delta)$ and $R_2=R_2(N,\delta,\eta)$ such that the following statements hold.\\
(1) If $\Sigma$ is a self-shrinking surface in $\bb R^N$ such that $\lambda(\Sigma)\le \delta$ and $|A|^2\le \frac 12$ on $\Sigma\cap B_{R_1},$ then $\Sigma=S^k(\sqrt{2k})\times \bb R^{2-k}$ for some $k=0,1,2.$\\
(2) If $\Sigma$ is a self-shrinking surface in $\bb R^N$ such that $\lambda(\Sigma)\le \delta,$ $|\mathring A|^2\le \frac 14$ and $|\h|\ge \delta$ on $\Sigma\cap B_{R_2},$ then $\Sigma=S^k(\sqrt{2k})\times \bb R^{2-k}$ for some $k>0.$\\
\end{thm}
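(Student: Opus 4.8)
The plan is to argue by contradiction in both parts, using the compactness Theorem \ref{cpt} to extract a limit self-shrinker and then invoking the pointwise gap theorems for the second fundamental form (from \cite{CL} and \cite{CXZ}) together with the rigidity/classification of self-shrinking surfaces that saturate those gaps. For part (1), suppose no $R_1=R_1(N,\delta)$ works; then for each $j$ there is a self-shrinking surface $\Sigma_j\in\Set_N$ with $\lambda(\Sigma_j)\le\delta$ and $|A|^2\le\frac12$ on $\Sigma_j\cap B_j$, yet $\Sigma_j$ is not of the form $S^k(\sqrt{2k})\times\bb R^{2-k}$. By Theorem \ref{cpt} a subsequence converges in $\Cloc$ to some $\Sigma_\infty\in\Set_N$ with $\lambda(\Sigma_\infty)\le\delta$. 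Since the balls $B_j$ exhaust $\bb R^N$ and the convergence is smooth on compact sets, the bound $|A|^2\le\frac12$ passes to the limit globally on $\Sigma_\infty$. The gap theorem of \cite{CL}/\cite{CXZ} then forces $\Sigma_\infty$ to be one of the round cylinders $S^k(\sqrt{2k})\times\bb R^{2-k}$. Finally, since $\Sigma_j\to\Sigma_\infty$ smoothly and each $S^k(\sqrt{2k})\times\bb R^{2-k}$ is known to be rigid (strongly rigid in the sense of \cite{CIM}, \cite{CM20}, or simply because it is an isolated point of $\Set_N$ in $\Cloc$ — this is where one uses that the cylinders have no small deformations through self-shrinkers), for $j$ large $\Sigma_j$ must itself equal $S^k(\sqrt{2k})\times\bb R^{2-k}$, contradicting the choice of $\Sigma_j$.

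For part (2) the structure is identical but the hypothesis is $|\mathring A|^2\le\frac14$ and $|\h|\ge\delta$ on $\Sigma\cap B_{R_2}$, with $R_2$ allowed to depend also on $\eta$. Assuming no such $R_2$ exists, take $\Sigma_j\in\Set_N$ with $\lambda(\Sigma_j)\le\delta$, $|\mathring A|^2\le\frac14$ and $|\h|\ge\delta$ on $\Sigma_j\cap B_j$, none of which is a round cylinder $S^k(\sqrt{2k})\times\bb R^{2-k}$ with $k>0$. Extract a $\Cloc$-limit $\Sigma_\infty\in\Set_N$ via Theorem \ref{cpt}; the two pointwise bounds survive in the limit on all of $\Sigma_\infty$. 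The condition $|\h|\ge\delta>0$ everywhere rules out the hyperplane case $k=0$ (where $\h\equiv0$), so the gap/rigidity input from \cite{CL}, \cite{CXZ} (the version with control on $\mathring A$ and a positive lower bound on $|\h|$) identifies $\Sigma_\infty$ as $S^k(\sqrt{2k})\times\bb R^{2-k}$ with $k>0$. Rigidity of these cylinders then again yields $\Sigma_j=S^k(\sqrt{2k})\times\bb R^{2-k}$ for large $j$, a contradiction. The role of $\eta$ is presumably to absorb the gap between the sharp constant in the theorem of \cite{CXZ} and the clean value $\frac14$ stated here, or to quantify how close $|\h|$ must stay to $\delta$; in the contradiction scheme it only affects how the relevant gap constant is chosen and does not change the logical skeleton.

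The main obstacle I anticipate is not the compactness step — that is handed to us by Theorem \ref{cpt} — but rather verifying that the pointwise hypotheses genuinely pass to the $\Cloc$-limit in a way that triggers the \emph{global} gap theorems, and then upgrading "the limit is a cylinder" to "the approximating surfaces are cylinders." The first point requires that $B_j\uparrow\bb R^N$ and that $|A|^2$ (resp. $|\mathring A|^2$, $|\h|$) converge locally uniformly under $\Cloc$-convergence of self-shrinkers, which is automatic once one knows the limit is again a smooth properly embedded self-shrinker in $\Set_N$ — but one must make sure the limit is non-flat, i.e. still lies in $\Set_N$ and is not excluded by the definition; the entropy bound $\delta<2$ is what keeps multiplicity one and prevents degeneration, exactly as emphasized after Theorem \ref{cpt}. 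The second point — that a surface $\Cloc$-close to $S^k(\sqrt{2k})\times\bb R^{2-k}$ and still a self-shrinker must equal it — is precisely a strong-rigidity statement; if the paper does not already have it in the required form, the honest route is to cite \cite{CIM} and \cite{CM20} for the cylinder case and note that $\Set_N\cap\{\lambda\le\delta\}$ being compact together with strong rigidity makes the cylinders isolated. Care is also needed with the case distinction in $k$: a priori different $\Sigma_j$ in the contradicting sequence could be close to cylinders of different $k$, so one should first pass to a further subsequence along which the limiting $k$ is constant.
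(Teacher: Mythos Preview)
Your proposal is correct and follows essentially the same route as the paper: contradiction, extract a $\Cloc$-limit via Theorem \ref{cpt}, pass the pointwise bounds to the limit, apply the Cao--Li / Cao--Xu--Zhao gap theorems to identify the limit as a round cylinder, and then use strong rigidity of cylinders to force the approximating surfaces to be cylinders. The paper organizes the last two steps in the reverse order (it first argues, via Theorem \ref{CMrig} exactly as in the proof of Theorem \ref{43}, that the limit \emph{cannot} be a round cylinder, then derives the contradiction from the gap theorem), but unpacking that step yields precisely your rigidity argument; the one place where the paper is more specific than you is in naming Theorem \ref{CMrig} (which reduces to codimension one) as the concrete input, after which hypersurface rigidity finishes. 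Your confusion about the role of $\eta$ is understandable---the displayed hypothesis $|\h|\ge\delta$ in part (2) is almost certainly meant to read $|\h|\ge\eta$, which is the uniform lower bound needed so that the principal normal survives in the limit and the \cite{CXZ} gap theorem applies.
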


Recently, there are more and more global refined rigidity results for self-shrinking surfaces. 
Using those improved results, the bounds in Theorem \ref{rigA} are expected to be relaxed.	

The second rigidity result is about the characterization of being of codimension one. 
To elaborate, we would like to obtain a statement in the form that if an $n$-dimensional self-shrinker $\Sigma$ in $\bb R^N$ is a hypersurface on a large compact set (in the sense that $\Sigma\cap B_R$ is contained in an $(n+1)$-dimensional linear subspace for some large $R$), then $\Sigma$ should be a hypersurface self-shrinker. 
At this moment, using the compactness theorem and a quantification of planarity used in \cite{CM20} (cf. Proposition \ref{plane}), we are able to derive the following result for self-shrinking surfaces in $\bb R^4.$

\begin{thm}\label{rigB}
	Given $\delta<2$ and $\eta>0,$ we can find $R=R(\delta,\eta)$ such that the following statement holds. 
	If $\Sigma$ is a self-shrinking surface in $\bb R^4$ such that $\lambda(\Sigma)\le \delta$ and $\Sigma\cap B_R$ satisfies $|\h|\ge \eta$ and is contained in a $3$-dimensional linear subspace of $\bb R^4,$ then the whole $\Sigma$ is also contained in the same $3$-dimensional subspace.
\end{thm}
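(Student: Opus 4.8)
\emph{Proof proposal.} The plan is to argue by contradiction, feeding a sequence of would‑be counterexamples into the compactness theorem \ref{cpt}. Fix $\delta<2$ and $\eta>0$, and begin with two reductions. First, if $V$ is a $3$‑dimensional linear subspace with unit normal $e$, then $\Sigma\subseteq V$ is equivalent to the function $u:=\pair{x,e}$ vanishing identically on $\Sigma$; by the self‑shrinker equation this $u$ satisfies the linear drift‑Laplace equation $\D u-\tfrac12\pair{x,\n u}=-\tfrac12 u$ on any self‑shrinking surface, a second‑order elliptic equation with smooth coefficients, so one expects strong unique continuation to be the underlying mechanism. Second, the hypothesis $|\h|\ge\eta$ on the (nonempty) set $\Sigma\cap B_R$ forces $\Sigma$ to be non‑flat, hence $\Sigma\in\Set_4$; and $\lambda(\Sigma)\le\delta<2$ forces $\Sigma$ to be connected, since for a self‑shrinker the entropy equals its $F$‑functional, which is additive over connected components, and each component — itself a self‑shrinker — has entropy at least $1$, so two or more components would give $\lambda(\Sigma)\ge2$.

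Now suppose no $R=R(\delta,\eta)$ as claimed exists. Then there are self‑shrinking surfaces $\Sigma_i$ in $\bb R^4$ with $\lambda(\Sigma_i)\le\delta$ and $3$‑dimensional linear subspaces $V_i$ with unit normals $e_i$ such that $\Sigma_i\cap B_{R_i}$ satisfies $|\h|\ge\eta$ and lies in $V_i$, yet $\Sigma_i\not\subseteq V_i$, where $R_i\to\infty$. Rotating each $\Sigma_i$ by a suitable orthogonal transformation (which preserves $\lambda$, $\h$, and the self‑shrinker equation) we may assume $V_i=V$ and $e_i=e$ are fixed. By theorem \ref{cpt} a subsequence of $\{\Sigma_i\}$ converges in the $\Cloc$‑topology to some $\Sigma_\infty\in\Set_4$ with $\lambda(\Sigma_\infty)\le\delta$. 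Since $\Sigma_i\cap B_r\subseteq V$ once $R_i>r$, and $|\h|\ge\eta$ there, passing to the limit gives $\Sigma_\infty\subseteq V$ with $|\h_{\Sigma_\infty}|\ge\eta$ everywhere; thus $\Sigma_\infty$ is a complete, non‑flat, embedded hypersurface self‑shrinker in the $3$‑space $V$ with nowhere‑vanishing mean curvature. In particular $\Sigma_\infty$ is nonempty and meets some fixed ball, so $\Sigma_i$ meets that ball for $i$ large; hence $\Sigma_i\cap B_{R_i}$ is a nonempty open subset of the connected surface $\Sigma_i$ on which $\pair{x,e}\equiv0$.

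It remains to upgrade the two facts — that $\pair{x,e}$ vanishes on an open subset of $\Sigma_i$, and that $\Sigma_i$ is $\Cloc$‑close on the large ball $B_{R_i}$ to the fixed hypersurface self‑shrinker $\Sigma_\infty\subseteq V$ — into the conclusion $\Sigma_i\subseteq V$, contradicting $\Sigma_i\not\subseteq V$. This is precisely what the quantification of planarity in proposition \ref{plane} is designed to deliver: applied in the direction $e$, it propagates the vanishing of $\pair{x,e}$ from $\Sigma_i\cap B_{R_i}$ to all of $\Sigma_i$. It also clarifies the role of the hypotheses: $|\h|\ge\eta$ is the non‑degeneracy needed so that the limiting hypersurface self‑shrinker in $V$ stays away from flat pieces where the propagation degenerates, and the setting of surfaces in $\bb R^4$ lying on $B_R$ in a $3$‑subspace — codimension two there, with the extra normal direction $e$ essentially unique — is the one in which proposition \ref{plane} is formulated. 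I expect this propagation step — turning coincidence with a fixed hypersurface self‑shrinker in $V$ on an arbitrarily large ball into exact containment in $V$, uniformly over the $\Cloc$‑compact family — to be the main obstacle; it is the point at which both the compactness theorem \ref{cpt} and the quantitative (rather than merely qualitative) unique continuation are genuinely used.
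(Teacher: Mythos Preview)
Your contradiction setup and the passage to the limit $\Sigma_\infty$ via theorem~\ref{cpt} are carried out correctly, and you correctly arrive at the fact that $\Sigma_\infty\subseteq V$ is a complete non-flat embedded self-shrinking hypersurface in the $3$-space $V$ with $|\h|\ge\eta$ everywhere. The gap is precisely where you locate it yourself: the ``propagation step'' is not supplied, and your appeal to proposition~\ref{plane} is a misreading of that statement. Proposition~\ref{plane} is only the pointwise characterization ``$\Sigma\subseteq\{\text{$3$-plane}\}\iff\langle A,\B\rangle\equiv 0$ on $\Sigma$''; it does not propagate planarity from a ball to the whole surface, and in particular it says nothing about $\Sigma_i$ once you only know $\langle A,\B\rangle=0$ on $\Sigma_i\cap B_{R_i}$.

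The paper closes the gap by a different route. Since $\Sigma_\infty$ is a codimension-one self-shrinker in $V\cong\bb R^3$ with $|\h|\ge\eta>0$, the Huisken/Colding--Minicozzi classification of mean convex self-shrinkers forces $\Sigma_\infty$ to be a round cylinder $S^k(\sqrt{2k})\times\bb R^{2-k}$ for some $k\in\{1,2\}$. Now theorem~\ref{CMrig} applies: for $i$ large, $\Sigma_i\cap B_R$ is a $C^1$-small graph over this cylinder, and theorem~\ref{CMrig} then forces all of $\Sigma_i$ to lie in a $3$-dimensional Euclidean subspace, contradicting the choice of $\Sigma_i$. Thus the missing ingredient is not proposition~\ref{plane} but rather the combination ``mean convex classification $+$ cylinder rigidity (theorem~\ref{CMrig})''. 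Proposition~\ref{plane} is used only to encode planarity as the pointwise condition $\langle A,\B\rangle=0$, which is stable under $\Cloc$-limits.

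As an aside, the unique-continuation mechanism you sketch in your first paragraph is in fact a complete and \emph{stronger} argument on its own, bypassing compactness entirely: a self-shrinker is a minimal surface in the real-analytic metric $e^{-|x|^2/4}\delta_{ij}$, hence real-analytic; the coordinate function $u=\pair{x,e}$ is therefore real-analytic on $\Sigma$, vanishes on the nonempty open set $\Sigma\cap B_R$ (nonempty once $R>2$ by lemma~\ref{nt}), and hence vanishes on the connected surface $\Sigma$ (connectedness from $\lambda(\Sigma)<2$ as you argued). This would give the conclusion for any $R>2$, independent of $\delta$ and $\eta$, and without the hypothesis $|\h|\ge\eta$. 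You raised this idea but then abandoned it; had you pursued it, it would have yielded a cleaner and sharper result than the one in the paper.
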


We remark that we require the lower bound of the mean curvature vector in Theorem \ref{rigA} (2) and Theorem \ref{rigB}. 
This guarantees the presence of the global principal normal vector $\frac{\h}{|\h|},$ which is essential in \cite{CXZ} and \cite{CM20}. 

\subsection{Organization and Proof Outline}\label{1.3}
The idea of the proof and the structure of this note are as follows. 
In Section \ref{2}, we introduce some background knowledge and notations of self-shrinkers and rectifiable varifolds. 
Later, in Section \ref{3}, we start with an arbitrary sequence in $\Set_N.$ 
Based on the entropy bound, Allard's compactness infers that we can extract a (weakly) convergent subsequence, which is done in Section \ref{3.1}. 
Then in Section \ref{3.2} and \ref{3.3}, we argue, essentially based on Allard's regularity, that the limiting varifold is smooth and that the convergence is in the strong sense. 
Using the maximum principle and the rigidity of self-shrinkers, we show that the limiting self-shrinker is non-trivial and non-flat in Section \ref{4}. 
This concludes the proof of Theorem \ref{cpt}, which implies Theorem \ref{exist}. 
Finally, we use the compactness to improve a variety of rigidity results in Section \ref{5}.

\subsection*{\bf Acknowledgement}
The author is grateful to Prof. Bill Minicozzi for his invaluable encouragement and advice. 
He would also like to thank Kai-Hsiang Wang for his comments on an earlier
draft and the referees for the constructive comments.
Part of this work was completed when the author visited the National Center for Theoretical Science in Taiwan, and he is grateful for the warm hospitality of people there.

\section{\bf Preliminaries}\label{2}
In this note, we often use $\Sigma$ or $M$ to denote a submanifold or a self-shrinker (or an $F$-stationary varifold). 
In this section, we will introduce some notations and basic properties of them.

\subsection{Characterizations of Self-Shrinkers} 
Given an $n$-dimensional submanifold $\Sigma$ in $\bb R^N,$ we call it a self-shrinker if it satisfies
\begin{equation}\label{sfsk}
\h=-\frac{x^\perp}{2}
\end{equation}
where $\h$ is the mean curvature vector of $\Sigma,$ and $x^\perp$ is the normal component of the position vector $x.$ 
We have mentioned that the critical points of the $F$-functional are self-shrinkers. 
In fact, there are several equivalent conditions of self-shrinkers. 
\begin{prop}[\cite{CM12}]\label{sfchar}
	The following statements are equivalent.
	
	(1) $\Sigma$ satisfies the self-shrinker equation \eqref{sfsk}.
	
	(2) $\sqrt{-t}\Sigma$ ($t<0$) forms a mean curvature flow. i.e., it satisfies $(\bd_t x)^\perp=\h.$
	
	(3) $\Sigma$ is a critical point of the $F$-functional defined by \eqref{Ffunc}.
	
	(4) $\Sigma$ is a minimal submanifold with respect to the metric $e^{-\frac{|x|^2}{2n}}\delta_{ij},$ where $\delta_{ij}$ is the standard metric on $\bb R^N.$
\end{prop}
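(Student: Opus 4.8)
The plan is to run through the (essentially computational) equivalences of Colding--Minicozzi \cite{CM12}: I would prove $(3)\Leftrightarrow(4)$ by directly identifying the relevant weighted areas, then $(3)\Leftrightarrow(1)$ via the first variation of $F$, and finally $(1)\Leftrightarrow(2)$ by an explicit scaling computation. Throughout, a critical point of $F$ is understood with respect to compactly supported normal variations, which is the natural notion since the submanifolds need not be compact. For $(3)\Leftrightarrow(4)$, I would observe that for the conformal metric $\td{g} = e^{2\varphi}\delta_{ij}$ with $\varphi = -\tfrac{|x|^2}{4n}$ one has $e^{2\varphi} = e^{-|x|^2/(2n)}$, and since $\Sigma$ has dimension $n$ its induced volume form rescales by $e^{n\varphi} = e^{-|x|^2/4}$. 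Hence the $\td{g}$-area of $\Sigma$ is exactly $(4\pi)^{\frac n2}F(\Sigma)$, so $\Sigma$ is $\td{g}$-minimal (a critical point of $\td{g}$-area under compactly supported deformations) precisely when it is a critical point of $F$.

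For $(3)\Leftrightarrow(1)$, set $f = e^{-|x|^2/4}$, so that $\n f = -\tfrac12 x f$. For a compactly supported variation with velocity field $V$, the first variation of the weighted area (the constant $(4\pi)^{-n/2}$ being irrelevant for criticality) is
\[
\frac{d}{ds}\Big|_{s=0}\int_{\Sigma_s} f\, d\mu \;=\; \int_\Sigma\bigl(f\,\mathrm{div}_\Sigma V + \langle \n f, V\rangle\bigr)\, d\mu .
\]
Splitting $V = V^T + V^\perp$, the tangential part contributes zero after integration by parts (no boundary term, since $\bd\Sigma = \varnothing$ and $V$ is compactly supported), and using $\mathrm{div}_\Sigma V^\perp = -\langle \h, V^\perp\rangle$ one gets
\[
\frac{d}{ds}\Big|_{s=0}\int_{\Sigma_s} f\, d\mu \;=\; -\int_\Sigma \Bigl\langle \h + \tfrac12 x^\perp,\, V\Bigr\rangle\, f\, d\mu .
\]
Thus $F$ is stationary for every such $V$ if and only if $\h = -\tfrac12 x^\perp$, i.e.\ \eqref{sfsk}.

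For $(1)\Leftrightarrow(2)$, parametrize $\Sigma_t := \sqrt{-t}\,\Sigma$ ($t<0$) by $x(p,t) = \sqrt{-t}\,p$ with $p\in\Sigma$, so that $\bd_t x = -\tfrac{1}{2\sqrt{-t}}\,p = \tfrac{x}{2t}$. Since a dilation is linear, the normal space of $\Sigma_t$ at $\sqrt{-t}\,p$ coincides with that of $\Sigma$ at $p$, while the mean curvature vector scales by $\h_{\Sigma_t}(\sqrt{-t}\,p) = \tfrac{1}{\sqrt{-t}}\,\h_\Sigma(p)$. Hence the flow equation $(\bd_t x)^\perp = \h$ is equivalent to $\tfrac{1}{2t}\,x^\perp = \tfrac{1}{\sqrt{-t}}\,\h_\Sigma$, which (e.g.\ at $t=-1$) is exactly $\h_\Sigma = -\tfrac12 x^\perp$; the converse direction is read off the same identity.

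None of these steps presents a real obstacle; the points requiring some care are the bookkeeping of tangential versus normal components and of signs in the first-variation computation, the observation that tangential variations leave $F$ unchanged (so that testing against normal fields $V$ suffices), and the correct conformal rescaling of the induced volume form in $(3)\Leftrightarrow(4)$.
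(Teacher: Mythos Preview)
Your proposal is correct and follows essentially the same approach as the paper's proof sketch: the paper likewise deduces $(1)\Leftrightarrow(2)$ by direct scaling, $(1)\Leftrightarrow(3)$ from the first variation formula $\frac{d}{ds}F(\Sigma_s)|_{s=0} = -(4\pi)^{-n/2}\int_\Sigma \langle V,\h+\tfrac{x}{2}\rangle e^{-|x|^2/4}$, and $(3)\Leftrightarrow(4)$ from the observation that the volume form of the conformal metric $e^{-|x|^2/(2n)}\delta_{ij}$ is $e^{-|x|^2/4}dx$. Your write-up simply fills in the computational details that the paper leaves to \cite{CM12}.
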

\begin{proof}[Proof Sketch]
The equivalence between (1) and (2) follows from direct change-of-variable computations. 
One can see \cite[Lemma 2.2]{CM12} for a complete argument. 
The equivalence between (1) and (3) is based on the first variation formula of the $F$-functional. 
In fact, if $\Sigma_s$ ($s\in(-\varepsilon,\varepsilon)$) is a normal variation of $\Sigma$ with the variational normal vector field $V,$ then
\begin{equation}\label{Fvar}
\frac{d}{ds}F(\Sigma_s)|_{s=0}
= -(4\pi)^{-\frac n2}\int_\Sigma \pair{V,\h+\frac x2}e^{-\frac{|x|^2}{4}}dx.
\end{equation}
When $N=n+1,$ this formula is established in \cite[Lemma 3.1]{CM12}. 
The general case is similar, and one can find it in \cite{ALW}, \cite{AS} or \cite{LL}. 
Finally, (3) and (4) are equivalent based on the formula \eqref{Fvar}, the first variation formula of the volume, and the fact that the volume form of the conformal metric $e^{-\frac{|x|^2}{2n}}\delta_{ij}$ is exactly $e^{-\frac{|x|^2}{4}}dx.$
\end{proof} 

The round cylinders $S^k(\sqrt{2k})\times\bb R^{n-k}$ are the simplest $n$-dimensional self-shrinkers. 
They are also the only stable singularity models in the hypersurface case, in the sense that they are, as proven in \cite{CM12}, the only self-shrinkers that could not be perturbed away.

\subsection{Entropy and Polynomial Volume Growth} 
By definition, the entropy of a given $n$-dimensional submanifold $\Sigma$ is
\begin{equation}\label{lamexp}
\lambda(\Sigma)
= \sup_{x_0\in\bb R^N,t_0>0} F(t_0\Sigma+x_0)
=  \sup_{x_0\in\bb R^N,t_0>0} (4\pi t_0)^{-\frac n2}\int_\Sigma e^{-\frac{|x-x_0|^2}{4t_0}}dx
\end{equation}
via a change of variables. 
This expression can give us a volume control for self-shrinkers with finite entropy (cf. \cite[Section 2]{CM12}).
\begin{lemma}\label{poly}
	Let $\Sigma$ be an $n$-dimensional submanifold in $\bb R^N.$ 
	If $\lambda(\Sigma)<\infty,$ then $\Sigma$ has polynomial volume growth. 
	More precisely, for any $R>1,$ we have
	$$\V(\Sigma\cap B_R)
	\le C\cdot \lambda(\Sigma)R^n
	$$
	for some $C=C(n)>0$ where $B_R:=B_R(0)$ is the $R$-ball centered at the origin in $\bb R^N.$
\end{lemma}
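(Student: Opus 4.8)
The plan is to bound the volume $\V(\Sigma \cap B_R)$ by comparing it against a single well-chosen term in the supremum defining the entropy. Specifically, I would pick the center $x_0 = 0$ and the scale $t_0 = R^2$ in the expression \eqref{lamexp}, so that
\begin{equation*}
\lambda(\Sigma) \ge (4\pi R^2)^{-\frac n2}\int_\Sigma e^{-\frac{|x|^2}{4R^2}}\,dx
\ge (4\pi R^2)^{-\frac n2}\int_{\Sigma \cap B_R} e^{-\frac{|x|^2}{4R^2}}\,dx.
\end{equation*}
On $\Sigma \cap B_R$ we have $|x| \le R$, hence $e^{-|x|^2/(4R^2)} \ge e^{-1/4}$, a fixed positive constant. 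Therefore
\begin{equation*}
\lambda(\Sigma) \ge (4\pi R^2)^{-\frac n2} e^{-\frac14}\, \V(\Sigma \cap B_R),
\end{equation*}
which rearranges to $\V(\Sigma \cap B_R) \le e^{1/4}(4\pi)^{\frac n2} R^n\, \lambda(\Sigma)$, giving the claim with $C(n) = e^{1/4}(4\pi)^{n/2}$.

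Strictly speaking this already proves the statement for every $R>0$, not merely $R>1$; the restriction $R>1$ in the hypothesis is harmless and presumably kept only to match the form in which the estimate is later invoked. One subtlety worth checking is that the integral $\int_\Sigma e^{-|x|^2/4}\,dx$, and hence each term in the supremum, is genuinely a lower bound for $\lambda(\Sigma)$: this is immediate from the definition \eqref{lamexp} as a supremum over all $x_0$ and $t_0$, so no finiteness of $\Sigma$ is needed here — we only use the one-sided inequality. (If one instead wanted the reverse direction, i.e.\ that polynomial volume growth implies finite entropy, that would require a more careful covering or dyadic-decomposition argument, but that is not what is asked.)

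I do not expect any real obstacle: the argument is a one-line truncation-and-rescale estimate, of the type carried out in section 2 of \cite{CM12}. The only point requiring a modicum of care is the bookkeeping of the dimensional constant and making sure the chosen scale $t_0 = R^2$ is admissible (it is, since $R>1>0$ forces $t_0 > 0$). Everything else is a direct substitution into \eqref{lamexp}.
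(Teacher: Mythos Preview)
Your argument is correct and is essentially identical to the paper's own proof: both choose $x_0=0$, $t_0=R^2$ in \eqref{lamexp}, bound the Gaussian below by $e^{-1/4}$ on $\Sigma\cap B_R$, and arrive at the same constant $C(n)=e^{1/4}(4\pi)^{n/2}$.
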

\begin{proof} 
Using the expression \eqref{lamexp}, for any $R>1,$ 
$$
(4\pi R^2)^{-\frac n2} \cdot e^{-\frac 14}\V(\Sigma\cap B_R)
\le (4\pi R^2)^{-\frac n2}\int_{\Sigma\cap B_R} e^{-\frac{|x|^2}{4R^2}}
\le\lambda(\Sigma).$$
This proves the lemma with $C(n) = e^{\frac 14}\cdot(4\pi)^{\frac n2}.$ 
\end{proof}

As we mentioned, the round cylinders $S^k(\sqrt{2k})\times\bb R^{n-k}$ are the most important self-shrinkers. 
In \cite{S}, Stone calculated their entropies:
\begin{equation}\label{Sto}
\lambda(S^1)=\sqrt{\frac{2\pi}{e}}\approx 1.52
>\lambda(S^2)=\frac 4e\approx 1.47
>\lambda(S^3)>\cdots.
\end{equation}
This gives a complete list for the entropies of the round cylinders because $\lambda(\Sigma\times\bb R)=\lambda(\Sigma).$ 
In this note, we only care about those self-shrinkers with entropy less than $2$ or $\lambda(S^1)$ (when studying entropy minimizers). 
As a result, all of them have polynomial volume growth. 
Also, It is easy to see that if an immersed submanifold has entropy less than two, then it is embedded. 
Therefore, we will always assume (and obtain) embeddedness for submanifolds in our discussion.

\subsection{Varifolds and Flat Chains} 
In this note, we only discuss and deal with integer multiplicity rectifiable $n$-varifolds. 
Given an $n$-varifold $V$ in $\bb R^N,$ we write $\mu_V$ to be its corresponding measure, and $\delta V$ to be the first variation functional of $V.$ 
Its support, regular part, and singular part are denoted by $\spt V,$ $\R(V)$ and $\Sing(V).$ 
For a detailed background knowledge of varifolds, one could consult \cite{Simon}.

We say $V$ is a \textit{stationary} varifold if its first variation or its generalized mean curvature $\h$ vanishes. 
Using $\mu_V,$ we can also define the $F$-functional and the entropy functional for $V,$ in a natural manner that
$$F(V):=(4\pi)^{-\frac n2}\int_{\bb R^N} e^{-\frac{|x|^2}{4}}d\mu_V,$$
and
$$
\lambda(\Sigma)
:=  \sup_{x_0\in\bb R^N,t_0>0} (4\pi t_0)^{-\frac n2}\int_{\bb R^N} e^{-\frac{|x-x_0|^2}{4t_0}}d\mu_V.$$
Accordingly, we call $V$ an \textit{$F$-stationary} varifold if it is a critical point of the $F$-functional for varifolds. 
Equivalently, this means that its generalized mean curvature $\h$ satisfies
$$\h(x) = -\frac{x^\perp}{2}$$
for $\mu_V$-a.e. $x\in\spt V.$ 
Note that this equation makes sense since an integer multiplicity rectifiable varifold admits an approximate tangent space for $\mu_V$-a.e. $x\in\spt V.$

We also need to introduce rectifiable mod $2$ flat chains. 
The space of $n$-dimensional rectifiable mod $2$ flat chains consists of $n$-dimensional polyhedra endowed with the flat topology and $\bb Z_2$ coefficients and with some natural identifications among them. 
Given an integer multiplicity rectifiable $n$-varifold $V,$ it corresponds to an $n$-dimensional rectifiable mod $2$ flat chain $[V]$ in a natural way. 
For more details about flat chains (and their relations with varifolds), one could consult \cite{F}. 
The only properties we will use are White's result (Theorem \ref{Wbd}) and the fact that three half-planes meeting at the origin with angles $2\pi/3$ form a mod $2$ flat chain $P$ with $\bd P\neq 0.$

\section{\bf Smooth Convergence}\label{3}
We start to do some preparations for the proof of Theorem \ref{cpt}. 
Recall that we let $\Set_N$ be the set of all non-flat complete self-shrinking surfaces without boundary in $\bb R^N,$ and aim at proving that the family
$$\Set_{N,\delta}:=\{M\in\Set_N:\lambda(M)\le\delta\}$$
is compact for any $\delta<2.$ 
Suppose we are given a sequence $\Sigma_i$ in $\Set_{N,\delta}.$ 
Our goal is to show that $\Sigma_i$ admits a subsequence converging to a self-shrinker in $\Set_{N,\delta}.$

\subsection{Extracting a Limit}\label{3.1}
To extract a (weakly) convergent subsequence of $\Sigma_i,$ we need Allard's compactness theorem for varifolds \cite{A}. 
The version we state here is the one in \cite{Simon}. 

\begin{thm}[Allard's compactness \cite{A}]\label{Acomp}
	Suppose $V_i$ is a sequence of integer multiplicity rectifiable $n$-varifolds in an open set $U$ of $\bb R^N.$ 
	If there exists a uniform constant $C>0$ such that
	$$\mu_{V_i}(W)+\|\delta V_i\|(W)\le C$$
	for all $i\in\bb N$ and $W\Subset U,$ then $V_i$ admits a subsequence converging to an integer multiplicity rectifiable varifold in $U.$
\end{thm}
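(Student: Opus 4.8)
The plan is to reproduce the classical argument of Allard, split into three stages: a measure-theoretic compactness step, Allard's rectifiability step, and the integrality step for the limit.

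First I would regard each $V_i$ as a Radon measure on the Grassmann bundle $G_n(U)=U\times G(N,n)$; with this viewpoint, weak-$*$ convergence of varifolds is precisely weak-$*$ convergence of these measures tested against $C_c(G_n(U))$. The hypothesis $\mu_{V_i}(W)\le C$ for all $W\Subset U$ bounds the total mass on compact subsets of $G_n(U)$, so the Riesz representation theorem together with a diagonal argument over an exhaustion of $U$ by compact sets lets me extract a subsequence, still written $V_i$, converging weakly-$*$ to a Radon $n$-varifold $V$ on $U$; in particular the weight measures satisfy $\mu_{V_i}\to\mu_V$. Next I would note that for each fixed $X\in C_c^1(U;\bb R^N)$ the divergence $\operatorname{div}_S X$ is a continuous compactly supported function on $G_n(U)$, so $\delta V_i(X)\to\delta V(X)$; by lower semicontinuity this gives $\|\delta V\|(W)\le\liminf_i\|\delta V_i\|(W)\le C$ for $W\Subset U$, and thus $V$ has locally bounded first variation.

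The second stage is to show $V$ is rectifiable, for which I would verify positivity of the density $\mu_V$-a.e.\ and invoke Allard's rectifiability theorem. The first-variation bound makes the monotonicity formula available for $V$ and, uniformly, for every $V_i$: for $x\in\spt\mu_{V_i}$ the quantity $\rho\mapsto e^{\Lambda\rho}\mu_{V_i}(B_\rho(x))/\rho^{n}$ is non-decreasing, where $\Lambda$ depends only on $C$. Since each $V_i$ has integer multiplicity, its density is at least $1$ on $\spt\mu_{V_i}$, so letting $\rho\to 0$ in the monotone quantity yields a uniform lower bound $\mu_{V_i}(B_\rho(x))\ge c\rho^{n}$ for all small $\rho$ and all $x\in\spt\mu_{V_i}$. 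This lower bound is stable under the weak convergence $\mu_{V_i}\to\mu_V$ (it also forces local Hausdorff convergence of the supports), so it passes to $\Theta^{*n}(\mu_V,x)\ge c>0$ for every $x\in\spt\mu_V$. Combined with the local bound on $\|\delta V\|$, Allard's rectifiability theorem shows $V$ is a rectifiable $n$-varifold; that is, $\mu_V$ equals $\theta\,\mathcal H^n$ on an $n$-rectifiable set $M$ (and vanishes off it), with $0<\theta<\infty$ a.e.

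It remains to show that the multiplicity $\theta$ is integer-valued $\mathcal H^n$-a.e.\ on $M$; this is the step I expect to be the main obstacle, and it is where the integrality of the approximating varifolds must re-enter the argument in an essential, non-circular way. From monotonicity the density $\Theta^n(\mu_V,x)=\theta(x)$ exists for $\mathcal H^n$-a.e.\ $x\in M$, and rectifiability gives that the tangent varifold at such $x$ is the plane $T_xM$ counted with multiplicity $\theta(x)$. By a diagonal argument one realizes $\theta(x)\,|T_xM|$ as a weak limit of rescalings $(\eta_{x_i,r_i})_{\#}V_i$ of the \emph{integral} varifolds $V_i$, and then analyzes this blow-up using that each $V_i$ has integer density, together with the constancy theorem on the limiting plane; simultaneously controlling the two limits $i\to\infty$ and $r_i\to 0$ is exactly the technical core of Allard's integral compactness theorem. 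For this last step I would ultimately appeal to \cite{A} (see also \cite{Simon}).
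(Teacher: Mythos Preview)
The paper does not give its own proof of this statement: Allard's compactness is quoted as a background tool from \cite{A} (in the form recorded in \cite{Simon}) and then applied, so there is no in-paper argument to compare against. Your outline is the standard three-step proof one finds in \cite{A} and \cite{Simon}: weak-$*$ compactness on $G_n(U)$, rectifiability of the limit via a uniform lower density bound coming from monotonicity plus Allard's rectifiability theorem, and finally the integrality step by blow-up. That is the correct architecture, and your honest deferral to \cite{A,Simon} for the integrality step is appropriate, since that is genuinely the delicate part.

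One technical caveat in your write-up: the exponential monotone quantity $e^{\Lambda\rho}\mu_{V_i}(B_\rho(x))/\rho^{n}$ is what one gets when $\|\delta V_i\|\le\Lambda\,\mu_{V_i}$ (bounded generalized mean curvature), not merely when $\|\delta V_i\|(W)\le C$ for $W\Subset U$. Under the weaker hypothesis of locally bounded first variation one still has a monotonicity formula (see \cite{Simon}, \S17 and \S40), and it still yields the uniform lower density bound you need, but the monotone quantity is not literally the exponential one you wrote. This does not affect the logic of your argument, only the formula quoted; you may either state the general monotonicity or, since in the paper's application the $V_i$ are smooth self-shrinkers with locally bounded $|\h|$, note that the stronger hypothesis is available there.
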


Based on this fact, we can first extract a convergent subsequence of $\Sigma_i.$

\begin{prop}\label{varconv}
	A subsequence of $\Sigma_i$ converges to an integer multiplicity $2$-varifold $\Sigma.$
\end{prop}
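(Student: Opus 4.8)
The plan is to apply Allard's compactness theorem (Theorem \ref{Acomp}) to the sequence $\Sigma_i$, viewed as integer multiplicity rectifiable $2$-varifolds with multiplicity one. The only hypothesis to verify is the local uniform bound $\mu_{\Sigma_i}(W) + \|\delta \Sigma_i\|(W) \le C$ for every $W \Subset \bb R^N$; it suffices to check this on balls $B_R$ for arbitrary fixed $R > 1$. First I would bound the mass: since each $\Sigma_i \in \Set_{N,\delta}$ has $\lambda(\Sigma_i) \le \delta$, Lemma \ref{poly} gives $\V(\Sigma_i \cap B_R) \le C(2)\,\delta\, R^2$, a bound depending only on $\delta$ and $R$, not on $i$.

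Next I would bound the first variation. For a smooth submanifold, $\delta \Sigma_i(X) = -\int_{\Sigma_i} \langle X, \h_i\rangle$ for compactly supported $X$, so $\|\delta\Sigma_i\|(W) = \int_{W \cap \Sigma_i} |\h_i|\, d\mu_{\Sigma_i}$. Using the self-shrinker equation \eqref{sfsk}, $|\h_i| = |x^\perp|/2 \le |x|/2 \le R/2$ on $\Sigma_i \cap B_R$. Hence $\|\delta\Sigma_i\|(B_R) \le \frac{R}{2}\,\V(\Sigma_i \cap B_R) \le \frac{R}{2}\,C(2)\,\delta\, R^2$, again uniform in $i$. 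Combining the two estimates yields the hypothesis of Theorem \ref{Acomp} on every $B_R$, hence on every $W \Subset \bb R^N$.

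Therefore a subsequence of $\Sigma_i$ converges (in the varifold sense) to an integer multiplicity rectifiable $2$-varifold $\Sigma$ in $\bb R^N$, which is the claim. A standard diagonal argument over an exhausting sequence of balls $B_{R_k}$ extracts a single subsequence that works on all of $\bb R^N$ simultaneously.

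This step is essentially routine and I do not expect a genuine obstacle here; the bounds are immediate from the entropy hypothesis via the already-stated Lemma \ref{poly} and the defining equation. The real difficulties lie downstream: showing the limit varifold $\Sigma$ is smooth (ruling out singularities, e.g.\ triple junctions, which is where $\delta < 2$ and Allard's regularity theorem enter), that the convergence upgrades to $\Cloc$, and that $\Sigma$ is non-flat and non-trivial. Those are handled in the later subsections and are not part of this proposition. One minor point worth flagging in the proof is that the limit inherits the structure of an $F$-stationary varifold (its generalized mean curvature satisfies $\h = -x^\perp/2$ a.e.), which follows from the convergence of first variations, but strictly speaking the proposition as stated only asserts existence of the varifold limit.
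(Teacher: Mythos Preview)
Your proposal is correct and follows essentially the same approach as the paper: bound the mass on balls via Lemma~\ref{poly} and the entropy hypothesis, bound the first variation via the self-shrinker equation $|\h|\le |x|/2$, then invoke Allard's compactness (Theorem~\ref{Acomp}) together with a diagonal argument. Your write-up is in fact more explicit than the paper's, which merely sketches these same steps.
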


\begin{proof}
Since $\Sigma_i\in\Set_{N,\delta},$ they have a uniform entropy bound. 
Lemma \ref{poly} then infers that for any $R>1,$ there is a uniform volume bound
\begin{equation}\label{Vbd}
\V(\Sigma_i\cap B_R)\le C = C(R)
\end{equation}
for any $i.$ 
On the other hand, the first variation is represented by the mean curvature when the varifold is given by a smooth submanifold. 
As a result, based on the area bound \eqref{Vbd} and the self-shrinker equation \eqref{sfsk}, we can use Allard's compactness theorem \ref{Acomp} and a standard diagonal argument to obtain a subsequence of $\Sigma_i$ which converges to a rectifiable varifold $\Sigma$ (and we still denote the subsequence as $\Sigma_i$).
\end{proof}

\subsection{Smooth Limit}\label{3.2}
Our next goal is to show that the varifold $\Sigma$ constructed in Proposition \ref{varconv} is in fact a smooth limiting space. 

\begin{prop}\label{smlim}
	The limiting varifold $\Sigma$ constructed in Proposition \ref{varconv} is smooth.
\end{prop}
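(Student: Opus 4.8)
The plan is to rule out every singularity of $\Sigma$ by a tangent-cone analysis: the entropy bound $\delta<2$ keeps all densities below $2$, the mod $2$ flat-chain structure excludes triple-junction-type cones, and then Allard's regularity theorem together with elliptic bootstrapping promotes $\spt\Sigma$ to a smooth surface. First I would record that $\Sigma$ is a weak self-shrinker with controlled density. Passing the equation \eqref{sfsk} to the limit --- for a compactly supported $C^1$ vector field $X$ one has $\delta\Sigma_i(X)=-\int\pair{\h_{\Sigma_i},X}\,d\mu_{\Sigma_i}=\tfrac12\int\pair{x^\perp,X}\,d\mu_{\Sigma_i}$, whose integrand involves only the position and the tangent plane and hence passes to the limit under varifold convergence --- shows $\Sigma$ is $F$-stationary with $\h_\Sigma=-x^\perp/2$ for $\mu_\Sigma$-a.e.\ $x$, in particular with locally bounded generalized mean curvature; equivalently, $\Sigma$ is a stationary integral varifold for the smooth conformal metric of proposition \ref{sfchar}(4). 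The entropy passes to the limit too: for fixed $x_0,t_0$, splitting the Gaussian integral over $B_R$ and its complement and using the uniform polynomial volume bound of lemma \ref{poly} to control the tail, weak-$\ast$ convergence gives $F(t_0\Sigma+x_0)=\lim_i F(t_0\Sigma_i+x_0)\le\delta$, so $\lambda(\Sigma)\le\delta<2$. By the monotonicity formula (Huisken's, applied to the flow $\sqrt{-t}\Sigma$, or the weighted one for $F$-stationary varifolds), the density $\Theta(\mu_\Sigma,x_0)$ then exists, is upper semicontinuous in $x_0$, and is $\le\lambda(\Sigma)<2$ for every $x_0\in\spt\Sigma$.

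Next I would analyze blow-ups. Fix $x_0\in\spt\Sigma$; since $\h_\Sigma$ is locally bounded, the rescalings $(\Sigma-x_0)/\rho$ have generalized mean curvature tending to $0$ locally uniformly and uniformly bounded area ratios, so by Allard's compactness \ref{Acomp} and monotonicity any tangent varifold $C$ at $x_0$ is a \emph{stationary} integer $2$-varifold \emph{cone} with $\Theta(\mu_C,0)=\Theta(\mu_\Sigma,x_0)<2$. To exclude singular $C$ I would use the mod $2$ flat-chain structure. Each $\Sigma_i$ is a closed embedded surface, hence a mod $2$ flat cycle ($\partial[\Sigma_i]=0$); by White's theorem \ref{Wbd} the limit $\Sigma$ carries the structure of a mod $2$ flat chain with $\partial[\Sigma]=0$, and since a tangent cone of a cycle is a cycle, $\partial[C]=0$. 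But a stationary integral $2$-cone with $\Theta(\cdot,0)<2$ that is a mod $2$ cycle is forced to be a multiplicity-one plane: the density bound makes its link a connected, multiplicity-one stationary geodesic network in $S^{N-1}$ of total length $<4\pi$; a closed geodesic of such length is a single great circle, so $C$ is a plane, whereas a network with a vertex produces a ray of $C$ along which $\ge 3$ half-planes meet --- the model case being the triple junction of Figure \ref{tj}, which, like any such junction cone, has $\partial[C]\ne 0$ mod $2$, contradicting $\partial[C]=0$. Hence every tangent cone of $\Sigma$, at every point of $\spt\Sigma$, is a multiplicity-one plane.

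It remains to upgrade this to smoothness. Since at each $x_0\in\spt\Sigma$ the unique tangent cone is a unit-density plane and the rescaled $L^p$ norms of $\h_\Sigma$ are small (as $\h_\Sigma$ is bounded), Allard's regularity theorem shows $\spt\Sigma$ is, near $x_0$, a multiplicity-one $C^{1,\alpha}$ graph over its tangent plane; writing it as the graph of $u\colon B\subset\bb R^2\to\bb R^{N-2}$, the self-shrinker equation \eqref{sfsk} becomes a quasilinear elliptic system for $u$ with smooth coefficients, so Schauder estimates bootstrap $u$ to $C^\infty$. Therefore $\Sigma$ is a smooth embedded surface. I expect the main obstacle to be the exclusion of singular tangent cones above: this is precisely where the strict bound $\delta<2$ and the mod $2$ cycle structure are indispensable, and the delicate point is to be sure that White's theorem \ref{Wbd} applies in this (possibly large) codimension --- it concerns flat chains mod $2$ associated to varifold limits of cycles, which is exactly the situation here --- and that having zero mod $2$ boundary genuinely rules out \emph{every} junction cone compatible with $\Theta<2$, not only the triple junction of Figure \ref{tj}.
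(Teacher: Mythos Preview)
Your proof is correct and follows the same overall strategy as the paper: show $\Sigma$ is $F$-stationary with density $<2$ everywhere, classify the tangent cones using the entropy bound together with White's mod $2$ boundary result (theorem \ref{Wbd}), and invoke Allard's regularity (theorem \ref{Areg}) plus elliptic bootstrapping. The one structural difference is that the paper, instead of analyzing the link of the tangent $2$-cone $C$ directly, uses Federer's dimension reduction (lemma \ref{Fed}): it takes a putative nonzero singular point $y$ of $C$, blows up again to obtain a product $\Gamma'\times\bb R_y$ with $\Gamma'$ a stationary $1$-cone in $\bb R^{N-1}$, and then classifies $\Gamma'$. Your link analysis and the paper's iterated blow-up are two packagings of the same content; the second blow-up is simply the tangent-cone computation at a vertex of your geodesic network.

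The concern you raise at the end is the one place your write-up is genuinely incomplete, and the paper's second blow-up is precisely what resolves it. You assert that a junction of $\ge 3$ half-planes along a ray has nonzero mod $2$ boundary, but this fails when the valence is even. What saves you is the density bound you already recorded: for a stationary cone $C$, monotonicity gives $\Theta(\mu_C,y)\le\Theta(\mu_C,0)<2$ for every $y$, and along a ray through a $k$-valent vertex of the link the density is $k/2$ (this is exactly the content of the second blow-up $\Gamma'\times\bb R_y$ in the paper, where $\Gamma'$ is $k$ rays). Hence $k\le 3$, so at any vertex the valence is exactly $3$, the local model is the triple junction of figure \ref{tj}, and $\bd[C]\neq 0$ mod $2$ as claimed. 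Once you insert this one line, your argument is complete and equivalent to the paper's.
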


To prove the smoothness of $\Sigma,$ we need Allard's regularity theorem \cite{A}. 
The version we employ here is the one stated in \cite{I95}.

\begin{thm}[Allard's regularity \cite{A}]\label{Areg}
	There exist constants $\varepsilon_1,\delta,\alpha$ and $C$ depending only on $n$ and $N$ such that the following holds. 
	Suppose $V$ is an integer multiplicity rectifiable $n$-varifold in $\bb R^N$ with locally $L^1(\mu_V)$ generalized mean curvature $\h$ and $0\in\spt V.$ 
	If 
	\begin{equation*}
	|\h|\le\frac\varepsilon r\text{ for }\mu_V\text{-a.e. }x\in B_r
	\text{ and }
	\mu_V(B_r)\le (1+\varepsilon) \omega_nr^n
	\end{equation*}
	for some $\varepsilon<\varepsilon_1$ and $r>0,$ then there is an $n$-plane $T\sbst\bb R^N$ containing $0,$ a domain $\Omega\sbst T,$ and a $C^{1,\alpha}$ function $u\colon \Omega\to T^\perp$ such that 
	\begin{equation*}
	\spt V \cap B_{\delta r} = \text{graph }u\cap B_{\delta r}
	\end{equation*}
	and
	\begin{equation*}
	\sup_\Omega \left|\frac ur\right| + \sup_\Omega |Du| + r^\alpha\sup_{x\neq y\text{ in }\Omega} \frac{|Du(x)-Du(y)|}{|x-y|^\alpha}\le C\varepsilon^{\frac{1}{4N}}.
	\end{equation*}
\end{thm}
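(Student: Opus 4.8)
The plan is to prove this by the classical $\varepsilon$-regularity strategy for almost stationary varifolds, which decomposes into a monotonicity step, a Lipschitz-approximation step, a harmonic-comparison (tilt-excess decay) step, and a Campanato-type iteration. \textbf{Step 1 (monotonicity and multiplicity one).} I would first record the monotonicity formula: for $V$ with locally $L^1(\mu_V)$ generalized mean curvature $\h$, the weighted mass ratio $\omega_n^{-1}\rho^{-n}\mu_V(B_\rho(x))\,e^{\Lambda\rho}$ is nondecreasing in $\rho$, with $\Lambda$ controlled by the bound on $|\h|$. Combined with $\mu_V(B_r)\le(1+\varepsilon)\omega_n r^n$ this gives, for every $x\in\spt V\cap B_{r/2}$ and $\rho\le r/4$, an upper density bound $\Theta(V,x)\le 1+C\varepsilon$; since an integer multiplicity varifold has $\Theta(V,x)\ge 1$ on its support, we get $\Theta(V,x)=1$ up to the $C\varepsilon$ error, i.e.\ $V$ is ``almost multiplicity one'' on a ball of definite size — this is what makes the Lipschitz approximation below single-valued, so the mass hypothesis cannot be dropped. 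The same monotonicity identity yields the basic excess bound relating the tilt-excess
$$
E(x,\rho):=\rho^{-n}\int_{B_\rho(x)}\|p_{T_yV}-p_{T}\|^2\,d\mu_V(y)
$$
(with $p_S$ the orthogonal projection onto an $n$-plane $S$ and $T$ a suitable reference plane) to the mass-ratio deficit $(1+\varepsilon)-\omega_n^{-1}\rho^{-n}\mu_V(B_\rho(x))$ plus a $\int\langle x,\h\rangle$ term; this is where smallness of the mass deficit first forces smallness of $E$ after a harmless rotation of $T$.

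\textbf{Step 2 (Lipschitz approximation).} Assuming the excess and $r\sup|\h|$ small, I would run a Besicovitch-type covering argument — separating the points of $\spt V$ whose density is within $CE$ of $1$ from a ``bad set'' of $\mu_V$-measure $\le CE\rho^n$, using only the monotonicity formula and no a priori regularity — to show that $\spt V\cap B_{\delta_0 r}$ agrees, off this bad set, with the graph of a Lipschitz $f\colon T\to T^\perp$ of Lipschitz constant $\le CE^{1/2}$. \textbf{Step 3 (tilt-excess decay).} This is the technical heart. Because $\delta V(X)=-\int\langle X,\h\rangle\,d\mu_V$ and $|\h|$ is small, $f$ is an approximate critical point of the Dirichlet energy on $B_{r/2}\subset T$; comparing $f$ with the harmonic function $h$ sharing its boundary data on $\partial B_{r/2}$ and using that the tilt-excess of a harmonic function improves by a factor $C\theta^2$ when passing from scale $\rho$ to scale $\theta\rho$ — while absorbing the $L^2$-error $\|D(f-h)\|_{L^2}$ and the contribution of the bad set — yields an estimate of the form
$$
E(x,\theta\rho)\le C_0\theta^2\,E(x,\rho)+C_0\theta^{-n}\big(\rho\sup_{B_\rho}|\h|\big)^2+C_0\theta^{-n}\,D(x,\rho),
$$
where $D(x,\rho)$ is the mass deficit at scale $\rho$. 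Fixing $\theta$ so small that $C_0\theta^2\le\tfrac12\theta^{2\alpha}$ for some $\alpha\in(0,1)$ turns this into genuine decay with summable errors, because the scale-critical hypothesis $|\h|\le\varepsilon/r$ makes $\rho\sup_{B_\rho}|\h|\le\rho\varepsilon/r$ contract by $\theta$ at each step, and the deficits $D(x,\theta^k\rho)$ are controlled and summable by the monotonicity formula.

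\textbf{Step 4 (iteration and the quantitative estimate).} Iterating Step 3 at the scales $\theta^k\rho$ about each $x\in\spt V\cap B_{\delta r}$ gives $E(x,\rho)\le C\varepsilon^{1/(2N)}(\rho/r)^{2\alpha}$; telescoping the successive rotations of the reference plane shows the approximate tangent plane $T_xV$ varies Hölder-continuously in $x$ with exponent $\alpha$, and a standard Campanato--Morrey argument upgrades the Lipschitz graph to a $C^{1,\alpha}$ graph, producing the plane $T$, a domain $\Omega\sbst T$, and $u\colon\Omega\to T^\perp$ with $\spt V\cap B_{\delta r}=\text{graph }u\cap B_{\delta r}$; tracking constants through the finitely many reductions (including the power loss already present in Step 1, and a square root when passing from $E$ to $|Du|$) gives $\sup_\Omega|u/r|+\sup_\Omega|Du|+r^\alpha\sup_{x\neq y}|Du(x)-Du(y)|/|x-y|^\alpha\le C\varepsilon^{1/(4N)}$. \textbf{Main obstacle.} The genuinely delicate point is the interplay of Steps 2 and 3: bounding the measure of the non-graphical bad set purely in terms of the excess with no regularity input, and then quantifying precisely how close to harmonic the Lipschitz approximant is, all the while keeping every error term scale-invariant so that the iteration in Step 4 closes. (In the present note this theorem is invoked verbatim from \cite{A} and \cite{I95}, so no new argument is needed; the above is the route one would take to reprove it.)
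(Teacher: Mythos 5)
The paper does not supply a proof of this statement: Theorem~\ref{Areg} is quoted as Allard's regularity theorem, citing \cite{A} for the original result and \cite{I95} for the exact formulation used, and it is invoked purely as a black box in Propositions~\ref{smlim} and~\ref{smconv}. Your four-step sketch --- monotonicity and near-unit density, single-valued Lipschitz approximation off a small bad set, harmonic comparison for tilt-excess decay, and Campanato iteration --- is a faithful high-level outline of the classical argument in \cite{A} and \cite{Simon}, and you correctly flag at the end that no new argument is needed in this paper; there is simply no in-paper proof to compare against.
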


We also need a Federer-type dimension reduction, which is often implicitly used in various dimension reduction arguments (cf. Lemma 5.8 in \cite{CIMW}).

\begin{lemma}\label{Fed}
	Let $\Gamma$ be a conical stationary rectifiable varifold in $\bb R^N.$ 
	If $y\in\Sing(\Gamma)\setminus\{0\},$ then every tangent cone to $\Gamma$ at $y$ is of the form $\Gamma'\times \bb R_y,$ where $\Gamma'$ is a conical stationary varifold in $\bb R^{N-1}$ and $\bb R_y$ is the line in the direction of $y.$
\end{lemma}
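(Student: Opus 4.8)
The plan is to exploit the conical structure of $\Gamma$ to split off the $\mathbb R_y$-direction and then recognize tangent cones at $y$ as products. First I would observe that since $\Gamma$ is a cone (invariant under dilations about the origin) and $y \neq 0$, the whole ray through $y$ lies in $\spt\Gamma$, and more is true: by the standard argument that a cone is translation-invariant along any ray of singular points of maximal dimension in its support, one shows that a neighborhood of the segment from the origin to $y$ inside $\spt\Gamma$ is invariant under translations in the $y$-direction. Concretely, I would blow up $\Gamma$ at $y$: by Allard's compactness (Theorem 3.3 — the hypotheses hold since $\Gamma$ is stationary, hence has vanishing first variation, and has locally bounded mass as a rectifiable varifold), the rescalings $\eta_{y,r\,\#}\Gamma$ subconverge as $r\to 0$ to a tangent cone $C$ at $y$, which is itself a stationary integer rectifiable cone. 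The key point is that because $\Gamma$ is already a cone about $0$, the dilations about $0$ can be combined with the dilations about $y$ to produce, in the limit, translation-invariance of $C$ in the direction $y$.

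The central computation is this: for $r$ small, $\eta_{y,r}(x) = (x-y)/r$, and one checks that $\eta_{y,r}\circ \eta_{0,\lambda} = \eta_{y/\lambda, r/\lambda}\circ(\text{translation})$ appropriately, so that applying a dilation about $0$ to $\Gamma$ (which preserves $\Gamma$) and then blowing up at $y$ is the same as blowing up $\Gamma$ at a point approaching $0$ along the ray $\mathbb R_+ y$. Taking $\lambda\to\infty$ pushes the basepoint to $0$ while the blow-up scale is correspondingly adjusted; passing to the limit and using the monotonicity formula (so that the density $\Theta(\Gamma, ty)$ is constant in $t>0$ along the ray, a consequence of conicality) forces the tangent cone $C$ to be invariant under translation by $y$. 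A cone that is translation-invariant in the direction $y$ splits isometrically as $\Gamma' \times \mathbb R_y$, where $\Gamma'$ is the slice, a stationary integer rectifiable cone in the hyperplane $y^\perp \cong \mathbb R^{N-1}$; stationarity of $\Gamma'$ follows from stationarity of the product together with the fact that the first variation of a product splits.

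I expect the main obstacle to be making the ``combining dilations'' step rigorous at the level of varifold convergence rather than just formal set-theoretic manipulation: one must track that mass ratios converge, invoke the monotonicity formula for stationary varifolds to guarantee the density is locally constant along the ray through $y$ (which is where conicality of $\Gamma$ is genuinely used), and confirm that the limit is independent of how the two scales are coupled so that the resulting cone really is the tangent cone at $y$ and really is $y$-translation-invariant. The splitting $C = \Gamma'\times\mathbb R_y$ and the stationarity of $\Gamma'$ are then routine: the former is the classical fact that a stationary varifold invariant under a one-parameter group of translations is a product, and the latter follows by testing the first variation of $C$ against vector fields independent of the $\mathbb R_y$-coordinate. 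Since the statement asserts this for \emph{every} tangent cone at $y$, no uniqueness of the blow-up is needed — each subsequential limit inherits the $y$-invariance by the same argument.
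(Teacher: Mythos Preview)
The paper does not actually prove this lemma; it merely records it as a standard Federer-type dimension reduction and refers the reader to Lemma~5.8 of \cite{CIMW}. Your proposal is precisely the classical argument behind that citation: exploit the dilation invariance $(\eta_{0,\lambda})_\#\Gamma=\Gamma$ to identify blow-ups at $y$ with blow-ups along the whole ray $\bb R_+ y$, conclude that every tangent cone $C$ at $y$ is translation-invariant in the $y$-direction, and then split $C=\Gamma'\times\bb R_y$ with $\Gamma'$ a stationary cone in $y^\perp\cong\bb R^{N-1}$. This is correct and nothing further is required.

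Two small cosmetic remarks. First, the sentence ``a cone is translation-invariant along any ray of singular points of maximal dimension'' is not what you mean: $\Gamma$ itself is not translation-invariant near $y$; it is the \emph{tangent cone} at $y$ that is. You say the right thing immediately afterward, so this is only phrasing. Second, the density constancy $\Theta(\Gamma,ty)=\Theta(\Gamma,y)$ along the ray is a byproduct rather than the engine of the proof; the real driver is the identity $(\eta_{y,r})_\#\Gamma=(\eta_{\lambda y,\lambda r})_\#\Gamma$, from which translation invariance of $C$ follows by writing $(\tau_{sy})_\#(\eta_{y,r_i})_\#\Gamma=(\eta_{y,\rho_i r_i})_\#\Gamma$ with $\rho_i\to 1$ and passing to the limit.
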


\begin{proof}[Proof of Proposition \ref{smlim}]
Based on Allard's regularity theorem \ref{Areg}, it suffices to show that any tangent cone to $\Sigma$ is a multiplicity one hyperplane, since the density of the tip $y$ of a tangent cone is the same as the density of $y$ in $\Sigma.$ 

Let $\Gamma$ be a tangent cone to $\Sigma$ at some $y\in\Sigma.$ 
Note that $\Sigma$ satisfies the shrinker equation \eqref{sfsk} in the weak sense based on the weak convergence. 
In other words, $\Sigma$ is an $F$-stationary varifold. 
Therefore, $\Sigma$ has a locally bounded generalized mean curvature, so $\Gamma$ is a stationary cone. 
As a result, $\R(\Gamma)$ is a minimal cone, and hence $\R(\Gamma)\cap S^{N-1}$ is a union of geodesics on $S^{N-1}.$ 
This implies that each connected component of $\R(\Gamma)$ is contained in a $2$-plane through the origin.

Let $P$ be a $2$-plane that intersects $\R(\Gamma).$ 
Suppose $P$ is not contained in $\R(\Gamma).$ 
Then there exists $y\in P\cap\Sing(\Gamma)\setminus\{0\}.$ Lemma \ref{Fed} then implies that any tangent cone to $\Gamma$ at $y$  is of the form $\Gamma'\times \bb R_y,$ where $\Gamma'$ is a $1$-dimensional conical stationary varifold in $\bb R^{N-1}$ and $\bb R_y$ is the line in the direction of $y.$ 
Thus $\Gamma'$ is a union of rays through the origin. 

On the other hand, the weak convergence $\Sigma_i\to \Sigma$ and the lower semicontinuity of the entropy functional implies that
\begin{equation}\label{lam<1.5}
\lambda(\Sigma)\le \liminf_{i\to\infty}\lambda(\Sigma_i)\le\delta<2.
\end{equation}
In particular, this implies that $\lambda(\Gamma')<2,$ so $\Gamma'$ consists of at most three rays through the origin since $\Gamma'$ is stationary. 
As a result, the possibilities include only three rays (triple junctions) or two rays (straight line). 
We would like to show that triple junctions could not happen as a blow-up limit. 
In fact, we will apply White's convergence result \cite[Theorem 3.3]{W09} to rule out this scenario.

\begin{thm}[\cite{W09}]\label{Wbd}
	Let $V_i$ be a sequence of integer multiplicity rectifiable varifolds that converge with locally bounded first variation to an integer multiplicity rectifiable varifold $V.$ 
	If their boundaries $\bd [V_i]$ converge (in the flat topology) to a limite chain $P,$ then $\bd[V]=P.$
\end{thm}

Both the convergence of rescaling of $\Sigma$ to $\Gamma$ and that of rescaling of $\Gamma$ to $\Gamma'\times\bb R_y$ satisfy the conditions in Theorem \ref{Wbd}. 
As a result, since $\Sigma$ has no boundary, we obtain that $\Gamma'\times \bb R_y$ also has no boundary (as a mod $2$ flat chain), so $\Gamma'$ could not be triple junctions, which implies that $\Gamma'\times \bb R_y$ could only be a $2$-plane. 
Hence, a tangent cone at $y$ is a $2$-plane, which leads to a contradiction since $y$ is a singularity. 
In conclusion, we prove that $P\sbst\R(\Gamma).$ 
The entropy bound \eqref{lam<1.5} guarantees that $P$ is the only $2$-plane that intersects $\R(\Gamma).$ 
Consequently, we derive that $\Gamma=P,$ and the smoothness of $\Sigma$ follows.
\end{proof}


\subsection{Smooth Convergence}\label{3.3}
Using Allard's regularity and the fact that $\Sigma$ is smooth (Proposition \ref{smlim}), we can improve the regularity of the convergence (which, a priori, is only a weak convergence of varifolds).
Also, a simple argument based on the maximum principle can lead to the fact that $\Sigma$ is non-trivial.

\begin{prop}\label{smconv}
	The convergence $\Sigma_i\to\Sigma$ is in the locally smooth sense, and $\Sigma$ is a non-trivial self-shrinker.
\end{prop}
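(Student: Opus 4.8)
The plan is to upgrade the weak varifold convergence $\Sigma_i \to \Sigma$ to locally smooth convergence by a covering argument using Allard's regularity theorem \ref{Areg}, and then to use the maximum principle to rule out $\Sigma = \emptyset$. First I would fix a point $p \in \Sigma$ and a small radius $r$. Since $\Sigma$ is smooth (proposition \ref{smlim}), near $p$ it is a multiplicity-one graph over its tangent plane $T_p\Sigma$, so the Gaussian density ratio $\mu_\Sigma(B_r(p))/(\omega_2 r^2)$ can be made as close to $1$ as we like by shrinking $r$. By the weak convergence $\Sigma_i \to \Sigma$ (as Radon measures), the same density ratio for $\Sigma_i$ is below $1 + \varepsilon$ for $i$ large. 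The mean curvature bound is automatic: on $\Sigma_i \cap B_R$ the self-shrinker equation \eqref{sfsk} gives $|\h| = |x^\perp|/2 \le R/2$, which is $\le \varepsilon/r$ once $r$ is small relative to $\varepsilon/R$. Hence for $i$ large the hypotheses of theorem \ref{Areg} hold on $B_r(p)$ for every $p$ in a compact piece of $\Sigma$, so each $\Sigma_i$ is locally a $C^{1,\alpha}$ graph over $T_p\Sigma$ with uniformly small $C^{1,\alpha}$ norm.

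Next I would bootstrap: the $C^{1,\alpha}$ graphs $u_i$ satisfy the self-shrinker equation, which in graphical coordinates is a quasilinear elliptic PDE with smooth coefficients (the $F$-functional is smooth/analytic), so standard Schauder estimates and elliptic bootstrapping give uniform $C^k$ bounds for every $k$ on slightly smaller balls. Arzelà–Ascoli then yields $C^\infty_{\mathrm{loc}}$ convergence of a subsequence of the $u_i$ to a graph $u$ over $T_p\Sigma$, and by uniqueness of the weak limit this graph must coincide with $\Sigma$ near $p$. Covering a compact exhaustion of $\Sigma$ by finitely many such balls and running the usual diagonal argument gives locally smooth convergence $\Sigma_i \to \Sigma$ on all of $\Sigma$; in particular each connected component of $\Sigma_i$ eventually lies, in $C^\infty_{\mathrm{loc}}$, near $\Sigma$, and $\Sigma$ is itself a smooth self-shrinker.

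Finally, to see $\Sigma$ is non-trivial (i.e.\ $\Sigma \ne \emptyset$), I would argue by contradiction: if $\Sigma = \emptyset$ then $\mu_{\Sigma_i}(B_R) \to 0$ for every $R$, so $\Sigma_i \cap B_R = \emptyset$ for $i$ large. But each $\Sigma_i$ is a complete non-flat self-shrinker, hence compact or noncompact but in either case its distance to the origin is controlled: the self-shrinker equation forces $\h = -x^\perp/2$, and the standard maximum-principle argument (testing the function $|x|^2$ on $\Sigma_i$, using $\tfrac12 \D_{\Sigma_i} |x|^2 = n - |x^\perp|^2/2 \ge n - \text{(something)}$ — more simply, at an interior minimum of $|x|^2$ one gets $\D|x|^2 \ge 0$, forcing $|x|^2 \le 2n$ there) shows that any complete self-shrinker must intersect the ball $B_{\sqrt{2n}} = B_2$ (here $n=2$). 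This contradicts $\Sigma_i \cap B_2 = \emptyset$ for large $i$, so $\Sigma \ne \emptyset$. The main obstacle I expect is the careful handling of multiplicity in the Allard step — ensuring the density ratio of the limit is genuinely close to $1$ and not to $2$ or higher, which is exactly where the entropy bound $\delta < 2$ is used (via proposition \ref{smlim}, which already establishes that tangent cones, and hence densities, are multiplicity one); once smoothness of $\Sigma$ is in hand the density-one property propagates to the $\Sigma_i$ by weak convergence and the rest is routine elliptic regularity.
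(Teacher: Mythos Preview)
Your proposal is correct and follows essentially the same approach as the paper: apply Allard's regularity theorem \ref{Areg} (using the smoothness of $\Sigma$ from proposition \ref{smlim} and the entropy bound $\delta<2$ to get multiplicity one, hence density ratios close to $1$) to obtain uniform local $C^{1,\alpha}$ graphical estimates, then bootstrap via elliptic PDE to $C^\infty_{\mathrm{loc}}$; for non-triviality, use the maximum-principle computation $\Delta|x|^2 = -|x^\perp|^2 + 2n$ at a minimum of $|x|^2$ (this is exactly the content of lemma \ref{nt}) to force each $\Sigma_i$ to meet $\ovl{B_{\sqrt{2n}}}$, so the limit cannot be empty. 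Your write-up is more detailed on the density-ratio step and phrases the non-triviality as a contradiction via $\mu_{\Sigma_i}(B_R)\to 0$, but the substance is identical to the paper's proof.
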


To prove the non-triviality of $\Sigma,$ we observe the following simple fact of self-shrinkers.
\begin{lemma}\label{nt}
	Any $n$-dimensional complete embedded self-shrinker $M$ in $\bb R^N$ intersects the closed ball $\ovl{B_{\sqrt{2n}}}$ with radius $\sqrt{2n}.$
\end{lemma}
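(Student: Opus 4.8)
The plan is to apply the second-order maximum principle to the function $f:=|x|^2$ restricted to $M$. The input is the elementary identity $\D_M|x|^2 = 2n + 2\pair{x,\h}$, valid for any $n$-dimensional submanifold, where $\D_M$ denotes the Laplace--Beltrami operator of $M$ and $\h$ its mean curvature vector. Substituting the self-shrinker equation \eqref{sfsk}, $\h=-x^\perp/2$, and using $\pair{x,x^\perp}=|x^\perp|^2$, this becomes
\begin{equation*}
\D_M|x|^2 = 2n - |x^\perp|^2 \le 2n,
\end{equation*}
equivalently $\dl|x|^2 = 2n - |x|^2$ for the drift Laplacian $\dl = \D_M - \tfrac12\pair{x,\n\,\cdot\,}$.

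Suppose for contradiction that $M\cap\ovl{B_{\sqrt{2n}}}=\emptyset$, so $|x|^2>2n$ at every point of $M$. I would first dispose of the compact case: if $M$ is closed, $f$ attains a minimum at some $p$, where $\n f(p)=2x^\top(p)=0$ — hence $x(p)=x^\perp(p)$ and $|x(p)|^2=|x^\perp(p)|^2$ — and $\D_M f(p)\ge 0$; but the identity above gives $\D_M f(p) = 2n - |x^\perp(p)|^2 = 2n - |x(p)|^2 < 0$, a contradiction. For a general complete $M$ I would instead examine $M\cap\ovl{B_R}$: since $M$ is proper (in the setting of this note it has polynomial volume growth by lemma \ref{poly}, hence is proper), this set is compact; fixing $p_0\in M$ and taking $R>|x(p_0)|$, the minimum of $f$ over $M\cap\ovl{B_R}$ is attained at a point $p$ with $|x(p)|<R$, which by continuity of $|x|$ is then a local minimum of $f$ on all of $M$, so the computation above applies verbatim at $p$ and yields the same contradiction.

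The step requiring the most care is the non-compact case, which is exactly where properness enters: I rely on it so that $|x|^2$ genuinely attains an interior minimum, whereas without it one is left only with a minimizing sequence that may escape to infinity, and the second-order test has nothing to act on. One can sidestep properness by working with the drift Laplacian instead: since $M$ equipped with the finite-mass weight $e^{-|x|^2/4}$ is a parabolic weighted manifold, the positive $\dl$-superharmonic function $|x|^2-2n$ — note $\dl(|x|^2-2n) = -(|x|^2-2n) < 0$ — must be constant, which contradicts the fact that $\dl$ of a nonzero constant is $0$. Either route uses some global control on $M$, and both are available in the present setting.
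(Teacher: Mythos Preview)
Your main argument is essentially the paper's: compute $\Delta_M|x|^2 = 2n - |x^\perp|^2$, locate a minimum of $|x|^2$ on $M\cap\ovl{B_R}$, and read off $|p|^2\le 2n$ from the second-order condition together with $x^\top(p)=0$. You are in fact more careful than the paper on one point: the paper asserts compactness of $M\cap\ovl{B_R}$ from ``completeness and embeddedness'' alone, whereas you correctly isolate properness as the needed hypothesis and note that in the context of the note (finite entropy, hence polynomial volume growth) it is available; you also make explicit why the minimizer is interior. Your alternative via the drift Laplacian and parabolicity of the Gaussian-weighted manifold is a genuinely different route---it trades the pointwise second-order test for a global Liouville theorem---but it buys nothing extra here since it too requires a global finiteness condition (finite $F$-functional) that is not part of the lemma's hypotheses, only of the ambient setting.
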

\begin{proof}
We will write $\n$ and $\D$ to be the Levi-Civita connection and the Laplacian operator on $M.$ 
First we prove the following simple relation. 
That is,
\begin{equation}\label{dx2}
\Delta|x|^2 = - |x^\perp|^2 + 2n.
\end{equation}
This was first established in \cite{CM12} when $N=n+1,$ based on which we give a simple proof for all codimension.
In fact, since $\D x = \h,$ the self-shrinker equation \eqref{sfsk} gives
\begin{equation*}
2\D x_i
= 2\pair{\h,\bd_i}
= -\pair{x^\perp,\bd_i}
= -x_i+  \pair{x^T,\bd_i}.
\end{equation*}
As a result,
\begin{equation*}
\D|x|^2
= 2\pair{\D x,x}+ 2|\n x|^2
= \left( -|x|^2+\pair{x^T,x}  \right) + 2n
= - |x^\perp|^2 + 2n.
\end{equation*}
Thus we verify \eqref{dx2}. 

Now we take $R>0$ such that $M\cap \ovl{B_R}\neq\emptyset.$ 
On this compact set (by the completeness and the embeddedness of $M$), we can take $p\in M\cap \ovl{B_R}$ such that
$$|p|^2 = \min_{x\in M\cap \ovl{B_R}}|x|^2.$$
Then the maximum principle implies
$$\D|x|^2(p)\ge 0\text{ and }p^T=\frac 12 \n|x|^2(p)=0.$$
Plugging these into \eqref{dx2} leads to
$$0\le -|p|^2+2n,$$
so
$$|p|\le \sqrt{2n}.$$
This concludes the proof of Lemma \ref{nt}.
\end{proof}

Now we can prove Proposition \ref{smconv} using Allard's regularity, Proposition \ref{smlim} and Lemma \ref{nt}.

\begin{proof}[Proof of Proposition \ref{smconv}]
Based on the entropy bound \eqref{lam<1.5}, the multiplicity of the convergence is $1.$ 
Since $\Sigma_i$ and $\Sigma$ are all smooth (Proposition \ref{smlim}) and have locally bounded (generalized) mean curvatures, Allard's regularity theorem \ref{Areg} gives a uniform local $C^{1,\alpha}$ graphical estimate. 
The standard elliptic PDE theory then leads to uniform higher derivative estimates, and thus the smooth convergence.

Lemma \ref{nt} infers that each $\Sigma_i$ intersects $\ovl{B_{\sqrt{2n}}},$ so the smooth convergence implies that $\Sigma\cap \ovl{B_{\sqrt{2n}}}\neq\emptyset,$ and thus $\Sigma$ is a non-trivial surface. 
\end{proof}

\section{\bf Non-flat Limits and Entropy Minimizers}\label{4}

We are now in a position to prove the main compactness result (Theorem \ref{cpt}) and the first application (Theorem \ref{exist}). 
To this end, we need a rigidity property of self-shrinkers from \cite[Corollary 6.21]{CM20}. 
Essentially, it says that if a self-shrinker is close to a round cylinder on a sufficiently large bounded set, then it must be a hypersurface in some Euclidean space.

\begin{thm}[\cite{CM20}]\label{CMrig}
	Given $k<n,$ there exists $R>2n$ such that if $M$ is an $n$-dimensional self-shrinker in $\bb R^N$ with $\lambda(M)<\infty$ and $M\cap B_R$ is the graph of a vector field $U$ over $S^k(\sqrt{2k})\times\bb R^{n-k}$ with $\|U\|_{C^1}<1/R,$ then there is an $(n+1)$-dimensional Euclidean subspace $W$ so that $M\sbst W.$
\end{thm}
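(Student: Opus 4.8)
\emph{Proof proposal.} The plan is to prove Theorem~\ref{CMrig} by a contradiction-and-compactness argument whose decisive ingredient is the \emph{decoupling} of the part of the self-shrinker equation transverse to the $(n+1)$-plane carrying the model cylinder $C:=S^k(\sqrt{2k})\times\bb R^{n-k}$. Write $C\sbst W_0:=\bb R^{n+1}$ and $\bb R^N=W_0\oplus V$ with $V:=\bb R^{N-n-1}$. If the theorem failed, then for every $R$ there would be a counterexample, producing self-shrinkers $M_i$ and radii $R_i\to\infty$ such that $M_i\cap B_{R_i}$ is a normal graph over $C$ of a field $U_i$ with $\|U_i\|_{C^1}<1/R_i$, while no $M_i$ lies in any $(n+1)$-dimensional subspace; applying to each $M_i$ a suitable rotation of $\bb R^N$ (which changes no hypothesis) we may also assume $U_i$ is $L^2$-orthogonal to the Jacobi fields of $C$ generated by ambient rotations. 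On any fixed ball $M_i$ is eventually a graph over $C$ with $C^1$-norm $\to 0$, so elliptic bootstrap gives $M_i\to C$ in $C^\infty_{\mathrm{loc}}$; and by Huisken monotonicity together with the $C^1$-closeness on $B_{R_i}$, $\lambda(M_i)\to\lambda(C)<2$, so by Lemma~\ref{poly} the $M_i$ have uniform polynomial volume growth.

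The crux is that the $V$-component of the linearized shrinker operator of $C$ sees only the $V$-component of the variation. Since $C\sbst W_0$, the bundle $V$ is a parallel flat sub-bundle of the normal bundle of $C$ on which the second fundamental form of $C$ vanishes, so the stability operator $L_C$ satisfies $(L_C U)^V=\dl_C U^V+\tfrac12 U^V$ componentwise, with $\dl_C:=\D_C-\tfrac12\pair{x,\n_C\,\cdot\,}$; neither the Simons-type term nor the normal connection Laplacian can transport the $W_0$-normal part of $U$ into the $V$-equation. Let $\delta_i$ be a weighted $L^2(B_{R_i/2})$-norm of $U_i^V$; it is positive, for otherwise $M_i\cap B_{R_i}\sbst W_0$, and then unique continuation (self-shrinkers are real-analytic) and Lemma~\ref{nt} would force $M_i\sbst W_0$, contrary to assumption. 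Put $\psi_i:=U_i^V/\delta_i$; then $\psi_i$ solves a uniformly elliptic equation $\dl_C\psi_i+\tfrac12\psi_i=Q_i$ whose right-hand side is divisible by $U_i^V$ and its derivatives with coefficients of size $O(\|U_i\|_{C^1})$ --- because the equation is vacuous when $U_i^V\equiv 0$ --- hence negligible in interior estimates. Granting the a priori bound discussed below, this gives uniform $C^2_{\mathrm{loc}}$ control, so along a subsequence $\psi_i\to\psi$ with $\dl_C\psi+\tfrac12\psi=0$, $\|\psi\|=1$, and $\psi$ still $L^2$-orthogonal to the rotation Jacobi fields.

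It remains to classify polynomial-growth solutions of $\dl_C\psi+\tfrac12\psi=0$ with $\psi$ valued in $V$. On the explicit cylinder, separation of variables (Hermite functions on the $\bb R^{n-k}$ factor, spherical harmonics on $S^k(\sqrt{2k})$) shows that $-\dl_C$ has discrete spectrum $0,\tfrac12,1,\dots$ with \emph{no} eigenvalue in $(0,\tfrac12)$, and that its $\tfrac12$-eigenspace is precisely the span of the linear functions on $W_0$ restricted to $C$. A polynomial-growth section lies in the weighted $L^2$-space and so has such an expansion; hence every polynomial-growth solution of $L_C\psi=0$ with $\psi$ valued in $V$ is a $V$-valued $W_0$-linear function on $C$ --- exactly the Jacobi fields generated by rotating $C$ inside $\bb R^N$. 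Our normalization annihilated precisely those, so $\psi=0$, contradicting $\|\psi\|=1$. This proves Theorem~\ref{CMrig}.

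The step I expect to be the main obstacle is the passage to the limit on the \emph{growing} balls $B_{R_i}$: one must prevent the normalizing mass of $\psi_i$ from escaping to infinity, i.e.\ prove a uniform reverse bound $\|U_i^V\|_{L^2_w(B_1)}\ge c\,\|U_i^V\|_{L^2_w(B_{R_i/2})}$. This is a quantitative unique-continuation/doubling estimate for $\dl_{M_i}+\tfrac12$, which one can attack using the honest identity $\dl_M\pair{x,e}=-\tfrac12\pair{x,e}$ for $e\in V$ (the computation already appearing in the proof of Lemma~\ref{nt}), a polynomial growth bound from the uniform volume estimate, and a three-annulus/frequency argument whose closure relies on the spectral gap above: $\dl_C+\tfrac12\le-\tfrac12$ on the $L^2$-orthogonal complement of the rotation modes. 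Making that estimate uniform in $i$, and checking that the ``best rotation'' normalization really kills the whole $(n+1)(N-n-1)$-dimensional space of rotation Jacobi fields, are the technical heart; this is, in outline, the scheme behind the cylindrical strong-rigidity theorems of Colding--Ilmanen--Minicozzi and Colding--Minicozzi, arranged so the codimension enters only through the inert sub-bundle $V$.
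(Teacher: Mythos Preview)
The paper does not contain a proof of this theorem: it is quoted verbatim from \cite{CM20} (corollary 6.21 there) and used as a black box in the proof of Theorem~\ref{43}. So there is no ``paper's own proof'' to compare against.

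That said, your outline is a faithful reconstruction of the contradiction--compactness scheme behind the cylindrical rigidity theorems of \cite{CIM} and \cite{CM20}, and you correctly isolate both the key structural fact (the transverse bundle $V$ is flat and $A$-null, so the $V$-component of the Jacobi operator decouples to $\dl_C+\tfrac12$) and the genuine technical hurdle (a uniform no-mass-escape/frequency estimate on growing balls). One comment on strategy: the conclusion here is weaker than full rigidity --- you only need $M\sbst W$, not $M=C$ --- and for this there is a more direct route that sidesteps the mass-escape issue entirely. Namely, on \emph{any} self-shrinker $M$ with $\lambda(M)<\infty$ the coordinate functions $f_e(x)=\pair{x,e}$ lie in the Gaussian $L^2$-space and satisfy $\dl_M f_e=-\tfrac12 f_e$ exactly (your own computation); hence the $N$ coordinate functions span a subspace of the $-\tfrac12$-eigenspace of $\dl_M$. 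The problem then reduces to showing that this eigenspace has dimension at most $n+1$ when $M$ is $C^1$-close to $C$ on $B_R$ for $R$ large, which is an eigenvalue/eigenspace stability statement for $\dl_M$ relative to $\dl_C$ and is what \cite{CM20} actually establish. This avoids linearizing the graph equation and normalizing $U_i^V$ altogether, because one works with globally defined eigenfunctions on $M$ rather than with the graph field on a finite ball.

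Your approach would also work and in fact proves more (it is the engine of the full ``$M$ must be a cylinder'' rigidity), but it is heavier than what the statement requires, and the step you flag --- making the three-annulus/doubling estimate uniform in $i$ --- is exactly where the real work lies.
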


Also, we need a strong rigidity result for graphical hypersurface self-shrinkers proven by Guang and Zhu \cite{GZ}. We only state it in the surface case.

\begin{thm}[{\cite[Theorem 0.1]{GZ}}]\label{Breg}
	Given $\lambda_0>0,$ there exists $R = R(\lambda_0)$ such that the following holds. 
	If $\Sigma^2\sbst \bb R^3$ is a smooth complete self-shrinker with $\lambda(\Sigma)\le\lambda_0$  and $\Sigma\cap B_R$ is graphical, then $\Sigma$ is a hyperplane.
\end{thm}

Using these rigidity properties, we can prove the main theorem. 
We state it here again.

\begin{thm}[Theorem \ref{cpt}]\label{43}
	Given any $\delta<2$ and $N>2,$ the space $\Set_{N,\delta}=\{\Sigma\in\Set_N:\lambda(\Sigma)\le\delta\}$ is compact in the $\Cloc$-topology.
\end{thm}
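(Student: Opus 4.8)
The plan is to assemble the pieces already developed in sections \ref{3} and \ref{4}. Starting from an arbitrary sequence $\Sigma_i\in\Set_{N,\delta}$, Propositions \ref{varconv}, \ref{smlim}, and \ref{smconv} already give that a subsequence converges locally smoothly to a \emph{smooth, non-trivial} self-shrinking surface $\Sigma$, with $\lambda(\Sigma)\le\delta<2$ by the lower semicontinuity \eqref{lam<1.5}. Since smooth convergence is preserved and a smooth limit of complete submanifolds with a uniform local area bound has no boundary, $\Sigma$ is complete without boundary. Thus the only thing left to verify is that $\Sigma$ is \emph{non-flat}, i.e. $\Sigma\in\Set_N$ rather than merely a hyperplane or lower-dimensional flat piece; then $\Sigma\in\Set_{N,\delta}$ and compactness follows.

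So the crux is: rule out the possibility that $\Sigma$ is a flat $2$-plane (a $1$-plane or $0$-plane is impossible since $\Sigma$ is a $2$-varifold of multiplicity one, but I would note this). Suppose for contradiction $\Sigma$ is a $2$-plane; since it is a self-shrinker through points in $\ovl{B_{2}}$ (Lemma \ref{nt} with $n=2$), it is a $2$-plane through the origin, so $\lambda(\Sigma)=1$. The idea is that for large $i$, $\Sigma_i$ is then $C^1$-close on a large ball $B_R$ to this plane, hence (rescaling) $\Sigma_i\cap B_R$ is the graph of a small vector field over $S^0(\sqrt 0)\times\bb R^2=\bb R^2$—wait, more cleanly: $\Sigma_i\cap B_R$ lies in a thin slab around a $2$-plane, so by Allard's regularity and elliptic estimates it is graphical over that plane with $C^1$-norm $<1/R$. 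I would then invoke Theorem \ref{CMrig} with $k=0$, $n=2$: this forces $\Sigma_i$ to be contained in a $3$-dimensional subspace $W_i\subset\bb R^N$. Hence each $\Sigma_i$ is a \emph{hypersurface} self-shrinker in a copy of $\bb R^3$, with $\lambda(\Sigma_i)=\lambda_{\bb R^3}(\Sigma_i)<2$. But now I can appeal to Brakke's gap theorem \ref{Breg} (with $n=2$): there is $\varepsilon(2)>0$ so that a complete smooth self-shrinker in $\bb R^3$ with entropy $\le 1+\varepsilon(2)$ is a hyperplane—contradicting $\Sigma_i\in\Set_N$ (non-flat)—provided $\lambda(\Sigma_i)\le 1+\varepsilon(2)$. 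Since $\lambda(\Sigma_i)\to\lambda(\Sigma)=1$, this holds for all large $i$, giving the contradiction.

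The main obstacle I anticipate is the step passing from ``$\Sigma_i$ is $C^1$-close to a $2$-plane on $B_R$'' to ``$\Sigma_i\cap B_{R'}$ is a graph over $S^0\times\bb R^2$ with small $C^1$-norm'' in the precise form Theorem \ref{CMrig} demands—one must be careful that $R$ can be taken as large as the (fixed) threshold $R$ of Theorem \ref{CMrig} requires, uniformly in the codimension-type data, and that the smooth convergence $\Sigma_i\to\Sigma$ indeed upgrades a slab bound to a graphical $C^1$ bound over the limiting plane; this is exactly where Allard's regularity theorem \ref{Areg} plus interior Schauder estimates enter. A secondary subtlety is interchanging quantifiers: $\varepsilon(2)$ from \ref{Breg} is fixed first, then $R$ is chosen (depending on $N,\delta$) large enough both for \ref{CMrig} and so that smooth closeness on $B_R$ forces the graphical hypothesis; since all of this is for a \emph{fixed} $\delta<2$ and $N$, there is no circularity. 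Once the non-flatness of $\Sigma$ is established, we conclude $\Sigma\in\Set_{N,\delta}$, and since the extracted subsequence converges to $\Sigma$ in $\Cloc$, the space $\Set_{N,\delta}$ is sequentially compact, hence compact, in the $\Cloc$-topology.
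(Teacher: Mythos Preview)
Your proposal is correct and follows essentially the same route as the paper: assemble Propositions \ref{varconv}, \ref{smlim}, \ref{smconv} to get a smooth non-trivial self-shrinking limit $\Sigma$ with $\lambda(\Sigma)\le\delta$, then rule out flatness by contradiction via Theorem \ref{CMrig} (with $k=0$) to force $\Sigma_i$ into a $3$-plane, followed by Brakke's gap Theorem \ref{Breg} to conclude $\Sigma_i$ is flat. The ``obstacle'' you flag about upgrading closeness to a graphical $C^1$ bound is already subsumed in the locally smooth convergence of Proposition \ref{smconv}, so no further appeal to Allard plus Schauder is needed at that stage; and the entropy step $\lambda(\Sigma_i)\to 1$ (which both you and the paper use) is justified because for self-shrinkers $\lambda=F$, and the Gaussian-weighted $F$ passes to the limit under $\Cloc$-convergence with the uniform polynomial volume bound.
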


\begin{proof}
By Proposition \ref{smlim}, Proposition \ref{smconv} and the lower semicontinuity of the entropy, the smooth limit $\Sigma$ given by Proposition \ref{varconv} satisfies the entropy bound.
Thus, it remains to show that $\Sigma$ is not flat, which then implies that $\Sigma\in\Set_{N,\delta}.$

We use a contradiction argument. 
Suppose $\Sigma$ is flat. 
Apply Theorem \ref{CMrig} for $k=0$ and get the corresponding $R>0.$ 
On the closed ball $\ovl{B_R},$ by its compactness and the convergence in the locally smooth topology (Proposition \ref{smconv}), we may assume $\Sigma_i\cap B_R$ is the graph of a vector field $U_i$ over the planar domain $\Sigma\cap \ovl{B_R}$ with $\|U_i\|_{C^1}$ arbitrarily small for $i$ large. 
Then when $i$ is so large that $\|U_i\|_{C^1}<1/R,$ Theorem \ref{CMrig} infers that $\Sigma_i$ is contained in a $3$-plane. 
Theorem \ref{Breg} then implies that $\Sigma_i$ is a hyperplane.
Thus we derive a contradiction since we assume all $\Sigma_i$'s are non-flat. 
Hence, we prove that $\Sigma$ is not flat, so $\Sigma\in\Set_{N,\delta}.$ 
\end{proof}

As a corollary, we prove that there exists an entropy minimizer in $\Set_N$ for any $N>2.$

\begin{cor}[Theorem \ref{exist}]
	For any $N>2,$ there exists $\Sigma\in\Set_{N}$ such that
	$\lambda(\Sigma)\le \lambda(M)$
	for all $M\in\Set_N.$
\end{cor}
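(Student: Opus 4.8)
The plan is to obtain the minimizer by the direct method, with Theorem~\ref{cpt} playing the role of the compactness ingredient. First I would check that the relevant infimum lies strictly below the threshold $2$. Set $\lambda_0 := \inf_{M\in\Set_N}\lambda(M)$. The round sphere $S^2(2)$, regarded as a subset of a $3$-dimensional linear subspace of $\bb R^N$, is a complete, non-flat, smooth embedded self-shrinking surface without boundary, hence belongs to $\Set_N$; by Stone's computation recorded in \eqref{Sto} we get $\lambda_0 \le \lambda(S^2(2)) = 4/e < 2$. In particular we may fix some $\delta$ with $\lambda_0 < \delta < 2$, which is the $\delta$ to which Theorem~\ref{cpt} will be applied.

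Next I would take a minimizing sequence $\Sigma_i \in \Set_N$ with $\lambda(\Sigma_i)\to\lambda_0$. Since $\lambda_0<\delta$, for all sufficiently large $i$ we have $\lambda(\Sigma_i)\le\delta$, i.e. $\Sigma_i \in \Set_{N,\delta}$. Theorem~\ref{cpt} then yields a subsequence (still denoted $\Sigma_i$) converging in the $\Cloc$-topology to some $\Sigma \in \Set_{N,\delta} \subseteq \Set_N$.

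It remains to see that $\Sigma$ realizes the infimum, which is where lower semicontinuity of the entropy enters. Since $\Cloc$-convergence implies weak convergence of the associated integer-multiplicity varifolds, the argument underlying \eqref{lam<1.5} applies: for every fixed $x_0 \in \bb R^N$ and $t_0 > 0$ one has $F(t_0\Sigma + x_0) = \lim_i F(t_0\Sigma_i + x_0) \le \liminf_i \lambda(\Sigma_i) = \lambda_0$, where the uniform polynomial volume growth from Lemma~\ref{poly} controls the Gaussian tails so that the non-compactly supported weight $e^{-|x|^2/4}$ may legitimately be integrated against the limiting measure. Taking the supremum over all $(x_0,t_0)$ gives $\lambda(\Sigma) \le \lambda_0$. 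On the other hand $\Sigma \in \Set_N$ forces $\lambda(\Sigma) \ge \lambda_0$ by definition of the infimum, so $\lambda(\Sigma) = \lambda_0 \le \lambda(M)$ for every $M \in \Set_N$, as claimed.

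The substantive work is already contained in Theorem~\ref{cpt}, so I do not expect a genuine obstacle here; the only step warranting a bit of care is the lower semicontinuity of $\lambda$, specifically the passage to the limit with the non-compactly supported Gaussian weight, but this is routine given the uniform volume estimates and is precisely the inequality \eqref{lam<1.5} already established.
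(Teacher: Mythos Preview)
Your proposal is correct and follows essentially the same route as the paper: choose $\delta\in(\lambda(S^2),2)$, take a minimizing sequence in $\Set_{N,\delta}$, apply Theorem~\ref{cpt} to extract a $\Cloc$-limit $\Sigma\in\Set_N$, and conclude by lower semicontinuity of the entropy as in \eqref{lam<1.5}. Your write-up is in fact a bit more explicit than the paper's in justifying why the infimum lies below $2$ and why $\lambda(\Sigma)\le\lambda_0$, but there is no substantive difference in the argument.
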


\begin{proof}
Take a minimizing sequence $\Sigma_i$ in $\Set_{N,\delta}$ for some $\delta\in (\lambda(S^2),2)$ such that $$\lambda(\Sigma_i)\to \inf_{M\in\Set_N}\lambda(M)$$
as $i\to\infty.$
Then Theorem \ref{cpt} implies that $\Sigma_i$ admits a subsequence converging to a non-flat self-shrinker $\Sigma\in\Set_N$ in the locally smooth topology, which gives that 
$$\lambda(\Sigma)= \inf_{M\in\Set_N}\lambda(M)$$
by the lower semicontinuity of the entropy functional again.
This proves Theorem \ref{exist}. 
\end{proof}

\section{\bf Strong Rigidity}\label{5}

In this section, we will give some other applications of our main compactness theorem.

\subsection{In Terms of Curvature}

To prove the first rigidity theorem, we will apply a gap theorem proven by Cao and Li \cite{CL}.

\begin{thm}[\cite{CL}]\label{gap1}
	Let $M$ is an $n$-dimensional complete self-shrinker without boundary and with polynomial volume growth in $\bb R^N.$ 
	If its second fundamental form satisfies
	\footnote{Note that our definition of self-shrinkers is different from that in \cite{CL} up to a constant, so the bound also differs from a multiple $2.$}
	$$|A|^2\le \frac 12,
	$$
	then $M$ is $S^k(\sqrt{2k})\times R^{n-k}$ for some $k=0,\cdots,n.$
\end{thm}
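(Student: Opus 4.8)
The plan is to run a weighted Bochner--Simons argument for self-shrinkers, in the spirit of the Colding--Minicozzi classification of hypersurface self-shrinkers with small curvature (\cite{CM12}, Section 10), carrying along the additional terms that come from the normal connection. First I would work with the drift Laplacian $\dl := \D - \frac 12\pair{x^T,\n\,\cdot\,}$ on $M$, which is self-adjoint with respect to the Gaussian measure $d\mu := e^{-|x|^2/4}\,dV$. Since $M$ has polynomial volume growth, $\mu(M)<\infty$ and, more to the point, one may integrate by parts against $d\mu$ with no boundary term: inserting a cutoff $\phi_R$ with $\phi_R\equiv 1$ on $B_R$, $\spt\phi_R\sbst B_{2R}$, $|\n\phi_R|\le C/R$, the annular contribution is bounded by $\frac{C}{R^2}\mu(B_{2R})\le C R^{n-2}e^{-R^2/4}\to 0$ by lemma \ref{poly}.

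Next I would derive a Simons-type identity. Combining Simons' identity for $A$, the Codazzi and Gauss equations, the Ricci identity for $\n^\perp$, and the shrinker equation $\h=-\frac{x^\perp}{2}$ gives a pointwise formula of the form
\[
\dl|A|^2 = 2|\n A|^2 + |A|^2 - 2\,\Phi(A),
\]
where $\n A$ is the full covariant derivative of $A$ and $\Phi(A)$ is the quartic Simons term: in codimension one $\Phi(A)=|A|^4$ (the contributions involving $\h$ cancel exactly), while in general $\Phi(A)=\sum_{\alpha,\beta}\pair{A^\alpha,A^\beta}^2+\sum_{\alpha,\beta}|[A^\alpha,A^\beta]|^2$ for an orthonormal normal frame $\{\nu_\alpha\}$ with shape operators $A^\alpha$. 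As a consistency check, on $S^k(\sqrt{2k})\times\bb R^{n-k}$ one has $\n A=0$, $|A|^2=\frac 12$ and $\Phi(A)=\frac 14$, so both sides vanish.

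The crucial algebraic input is then that $2\Phi(A)\le|A|^2$ whenever $|A|^2\le\frac 12$; in codimension one this is immediate from $\Phi(A)=|A|^4$, and in higher codimension it should follow from the sharp quartic estimates of Chern--do~Carmo--Kobayashi and Lawson (and the refinements of Li--Li) together with the extra structure carried by $\h=-x^\perp/2$. Granting this, the identity gives $\dl|A|^2\ge 2|\n A|^2\ge 0$, so $|A|^2$ is a bounded subsolution of $\dl$. Multiplying by $\phi_R^2$, integrating against $d\mu$, using $|\n|A|^2|\le\sqrt 2\,|\n A|$ to absorb the cross term, and letting $R\to\infty$ as in the first paragraph forces $\int_M|\n A|^2\,d\mu=0$, i.e.\ $\n A\equiv 0$: the second fundamental form of $M$ is parallel. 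Finally, a submanifold of Euclidean space with parallel second fundamental form is a product of round spheres and an affine subspace; the shrinker equation fixes the radii, and the pinching $|A|^2\le\frac 12$ excludes products of two or more spheres (which have $|A|^2\ge 1$), leaving exactly $M=S^k(\sqrt{2k})\times\bb R^{n-k}$ for some $0\le k\le n$.

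The main obstacle is the pinching step in codimension $\ge 2$: there $\Phi(A)$ sees the curvature of the normal bundle through the commutators $[A^\alpha,A^\beta]$, and reaching the sharp constant $\frac 12$ (rather than a weaker threshold) requires both the optimal algebraic inequality and an analysis of the borderline Veronese-type configurations. A secondary but essential point is the justification that the weighted integrations by parts produce no boundary term, which leans on polynomial volume growth (lemma \ref{poly}) in place of compactness.
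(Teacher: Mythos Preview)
The paper does not prove this statement: theorem \ref{gap1} is quoted from Cao--Li \cite{CL} and used as a black box, so there is no ``paper's own proof'' to compare against. Your sketch does follow the Cao--Li strategy (drift Laplacian, Simons-type identity, integration by parts justified via polynomial volume growth, conclude $\nabla A\equiv 0$, then classify submanifolds with parallel second fundamental form), and the side points---self-adjointness of $\dl$ against $e^{-|x|^2/4}\,dV$, the cutoff argument, and the exclusion of multi-sphere products---are handled correctly.

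The genuine gap is the pinching step you yourself flag. The pointwise inequality you need, $2\Phi(A)\le|A|^2$ whenever $|A|^2\le\tfrac12$, is \emph{false} with your $\Phi(A)=\sum_{\alpha,\beta}\langle A^\alpha,A^\beta\rangle^2+\sum_{\alpha,\beta}|[A^\alpha,A^\beta]|^2$. The sharp Li--Li bound is $\Phi(A)\le\tfrac32|A|^4$, achieved for instance by $A^1=\mathrm{diag}(1,-1)$, $A^2=\bigl(\begin{smallmatrix}0&1\\1&0\end{smallmatrix}\bigr)$; after rescaling so that $|A|^2=\tfrac12$ one gets $2\Phi(A)=\tfrac34>\tfrac12=|A|^2$. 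So feeding the CdCK/Li--Li inequality into your displayed identity only yields the weaker threshold $|A|^2\le\tfrac13$, not $\tfrac12$. Nothing about the shrinker equation (which constrains only $\mathrm{tr}\,A$, not the traceless part at a point) repairs this at the level of a pointwise algebraic inequality. Reaching the sharp constant requires the precise quartic expression that actually appears in the self-shrinker Simons identity---which is not simply the CdCK quartic, since the ambient space is flat and $H$ is nonzero---and a more careful integrated argument; this is exactly what Cao--Li carry out, and it is the part your sketch leaves open.
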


This result was first proven by Le and Sesum \cite{LS} for the hypersurface case, and then generalized to the full generality by Cao and Li later. 
Now, we are going to prove the first rigidity result using this gap theorem and the main compactness theorem.

\begin{prop}
	Given $\delta<2,$ there exists $R=R(N,\delta)>0$ such that the following statement holds. 
	If $\Sigma$ is a smooth complete self-shrinking surface in $\bb R^N$ with $\lambda(\Sigma)\le \delta$ and its second fundamental form satisfies $|A|^2\le \frac 12$ on $B_R,$ then $\Sigma=S^k(\sqrt{2k})\times\bb R^{2-k}$ for some $k=0,1,2.$
\end{prop}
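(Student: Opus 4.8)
The plan is to argue by contradiction using the compactness theorem \ref{cpt}, exactly in the spirit of the proof of theorem \ref{43}. Suppose the proposition fails. Then there is some $\delta<2$ and a sequence $\Sigma_i\in\Set_{N,\delta}$ with $|A_{\Sigma_i}|^2\le\frac12$ on $\Sigma_i\cap B_{R_i}$ for radii $R_i\to\infty$, yet none of the $\Sigma_i$ is a round cylinder $S^k(\sqrt{2k})\times\bb R^{2-k}$. (I first note that a self-shrinker in $\Set_N$ which is \emph{not} of this cylindrical form must have $|A|^2>\frac12$ somewhere, by theorem \ref{gap1}, since any $\Sigma\in\Set_N$ has polynomial volume growth by lemma \ref{poly}; so the negation of the conclusion really is ``$\Sigma$ is not a cylinder'', and the hypothesis $|A|^2\le\frac12$ on $B_{R_i}$ with $R_i\to\infty$ is the quantitative failure.) By theorem \ref{cpt}, after passing to a subsequence, $\Sigma_i\to\Sigma_\infty$ in $\Cloc$ for some $\Sigma_\infty\in\Set_{N,\delta}$, in particular $\Sigma_\infty$ is complete, non-flat, and has polynomial volume growth.

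Next I would transfer the curvature bound to the limit. Fix any $R>0$. For $i$ large we have $R_i>R$, so $|A_{\Sigma_i}|^2\le\frac12$ on $\Sigma_i\cap B_R$; since the convergence $\Sigma_i\to\Sigma_\infty$ is locally smooth, the second fundamental forms converge locally uniformly, hence $|A_{\Sigma_\infty}|^2\le\frac12$ on $\Sigma_\infty\cap B_R$. As $R>0$ was arbitrary, $|A_{\Sigma_\infty}|^2\le\frac12$ on all of $\Sigma_\infty$. Now apply theorem \ref{gap1}: $\Sigma_\infty$ has polynomial volume growth (lemma \ref{poly}, using $\lambda(\Sigma_\infty)\le\delta<\infty$), so it must be $S^k(\sqrt{2k})\times\bb R^{2-k}$ for some $k\in\{0,1,2\}$; and since $\Sigma_\infty\in\Set_N$ is non-flat we must have $k\in\{1,2\}$, i.e.\ $\Sigma_\infty$ is $S^1(\sqrt2)\times\bb R$ or $S^2(2)$.

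Finally I would run the rigidity theorem \ref{CMrig} to reach the contradiction, just as in theorem \ref{43} with $k=0$ replaced by $k\in\{1,2\}$. Since $\Sigma_i\to\Sigma_\infty=S^k(\sqrt{2k})\times\bb R^{2-k}$ in $\Cloc$, for the radius $R$ produced by theorem \ref{CMrig} (applied with this $k$ and $n=2$) and for $i$ large enough, $\Sigma_i\cap B_R$ is a graph of a vector field $U_i$ over $S^k(\sqrt{2k})\times\bb R^{2-k}$ with $\|U_i\|_{C^1}<1/R$; hence $\Sigma_i$ lies in a $3$-dimensional subspace $W_i\cong\bb R^3$. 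But $\Sigma_i$ is then a smooth complete hypersurface self-shrinker in $\bb R^3$ with $|A_{\Sigma_i}|^2\le\frac12$ on $\Sigma_i\cap B_{R_i}$, $R_i\to\infty$; passing to a further limit inside $\bb R^3$ (or invoking the already-obtained smooth limit, which then lies in a fixed $\bb R^3$) and applying theorem \ref{gap1} in codimension one, or more directly observing that the limit is the cylinder $S^k(\sqrt{2k})\times\bb R^{2-k}\subset\bb R^3$ and then using theorem \ref{CMrig} once more together with the classification of hypersurface self-shrinkers with $|A|^2\le\frac12$ (Le--Sesum \cite{LS}), we conclude each such $\Sigma_i$ is itself a round cylinder $S^k(\sqrt{2k})\times\bb R^{2-k}$ --- contradicting the choice of the $\Sigma_i$ as non-cylinders.

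Actually, the cleanest route avoids the second limiting argument: once $\Sigma_i\subset W_i\cong\bb R^3$, the curvature hypothesis $|A|^2\le\frac12$ on $\Sigma_i\cap B_{R_i}$ is not \emph{a priori} a global bound, so the pointwise gap theorem does not apply directly to $\Sigma_i$. The main obstacle is exactly this: upgrading a large-but-bounded-region curvature bound to a global one for a single $\Sigma_i$. This is precisely what the compactness theorem buys us --- it lets us pass the bound to a \emph{limit} where it does become global. So the logically economical statement is the one I gave: the contradiction is reached at the level of the limit $\Sigma_\infty$, which simultaneously must be a cylinder (by theorem \ref{gap1}) and, via theorem \ref{CMrig} applied to the converging $\Sigma_i$, forces the tail of the sequence to be cylinders too, against the hypothesis. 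The only place requiring care is checking that theorem \ref{gap1}'s hypotheses genuinely hold for $\Sigma_\infty$ (completeness and polynomial volume growth both come for free from membership in $\Set_{N,\delta}$), and that the non-flatness of $\Sigma_\infty$ indeed survives the limit --- but that is guaranteed by theorem \ref{cpt} itself, so no extra work is needed.
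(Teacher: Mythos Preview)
Your proposal is essentially correct and follows the same approach as the paper: argue by contradiction, extract a $\Cloc$-limit $\Sigma_\infty\in\Set_{N,\delta}$ via theorem \ref{cpt}, pass the curvature bound $|A|^2\le\tfrac12$ to $\Sigma_\infty$ by smooth convergence, and then invoke theorem \ref{gap1} to conclude that $\Sigma_\infty$ is a round cylinder; the contradiction comes from the rigidity of cylinders. This is exactly the paper's structure.

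The one place where you are slightly imprecise is the final step. You write that theorem \ref{CMrig} ``forces the tail of the sequence to be cylinders too'', but theorem \ref{CMrig} (as stated) only gives that $\Sigma_i$ lies in a $3$-plane, i.e.\ is a \emph{hypersurface}; it does not by itself conclude that $\Sigma_i$ is a cylinder. Your earlier attempts --- invoking Le--Sesum on $\Sigma_i$, or a second limiting argument --- run into the very obstacle you correctly identify (no global $|A|^2$-bound on $\Sigma_i$). The paper closes this gap by saying ``use theorem \ref{CMrig} and the same argument in the proof of theorem \ref{43}'': the role played there by Brakke's gap theorem (hypersurface self-shrinker with entropy near $1$ is a plane) is now played by the hypersurface strong rigidity of \cite{CIM} (or its higher-codimension extension in \cite{CM20}), which says that a self-shrinker graphical over a cylinder on a large enough ball with small $C^1$ norm must \emph{be} a cylinder. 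Once you insert that citation after CMrig has reduced to codimension one, your argument is complete and identical to the paper's. (Note also that theorem \ref{CMrig} as stated requires $k<n$, so the case $\Sigma_\infty=S^2(2)$ needs either the full \cite{CM20} rigidity or a separate, easy compactness argument for closed limits; the paper's phrasing sweeps this under the same reference.)
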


\begin{proof}
	We will prove the statement by contradiction. 
	If the statement were wrong, then for any $R>0,$ there would exist a smooth complete self-shrinking surface $\Sigma_R$ in $\bb R^N$ with $\lambda(\Sigma_R) \le \delta$ such that $\Sigma_R$ is not a round cylinder but its second fundamental form satisfies $|A|^2\le \frac 12$ on $\Sigma_R\cap B_R.$ 
	Then, Theorem \ref{cpt} implies that we can find a sequence $R_i\to\infty$ as $i\to\infty$ such that the sequence $\Sigma_{R_i}$ converges to a non-flat limit $\Sigma$ in the $\Cloc$-sense. 
	
	We can use Theorem \ref{CMrig} and the same argument in the proof of Theorem \ref{43} to show that this limit is not a round cylinder. 
	However, the smooth convergence implies that $|A|^2\le 1/2$ on $\Sigma.$ 
	As a result, we know that $\Sigma$ is a round cylinder by the gap theorem \ref{gap1}. 
	Therefore, we derive a contradiction.
\end{proof}

This proves the first conclusion of Theorem \ref{rigA}. 
Using exactly the same argument, we can prove the second conclusion of Theorem \ref{rigA} based on \cite[Theorem 1.6]{CXZ}. 
Note that we need a uniform lower bound of the magnitude of the mean curvature here since in the result of \cite{CXZ}, they focused on self-shrinkers with non-vanishing mean curvature (so that the principal normal is well-defined and they could proceed their arguments).

\subsection{In Terms of Planarity}

Next, we discuss self-shrinking surfaces that are of codimension one on large compact sets. 
To this end, we need to ``quantify'' the notion of planarity. 
This was done in \cite{CM20} for those surfaces in $\bb R^4$ with non-vanishing mean curvature vector.\footnote{For the details, see Section 12.2 in \cite{CM20}.} 

\begin{definition}[\cite{CM20}]
	Given a surface $\Sigma^2\sbst\bb R^4$ with non-vanishing mean curvature $\h,$ let $\N:=\frac{\h}{|\h|}$ be its principal normal. 
	Let $J$ be the almost complex structure of the normal bundle $N\Sigma,$ and we define the binormal $\B:=J\N.$
\end{definition} 

An important property of this binormal is the following fact, which can be proven via a system of Frenet-Serret type equations and a general derivative equation of $\h.$ 

\begin{prop}[\cite{CM20}]\label{plane}
	Given a self-shrinking surface $\Sigma^2\sbst\bb R^4$ with non-vanishing mean curvature, it is contained in a $3$-dimensional hyperplane if and only if $\pair{A,\B}=0$ on $\Sigma.$
\end{prop}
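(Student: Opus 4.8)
The plan is to prove the two implications separately; the converse is routine, and the substance is the forward direction, that $\pair{A,\B}=0$ on a self-shrinking surface $\Sigma^2\sbst\bb R^4$ with $\h\neq 0$ forces $\Sigma$ into a $3$-dimensional linear subspace. The guiding principle is that $\pair{A,\B}=0$ says exactly that $A$ takes values in the line bundle $\mathrm{span}\,\N\sbst N\Sigma$, so $\Sigma$ behaves like a hypersurface with unit normal $\N$; I expect the shrinker equation to upgrade this to the statement that $\B$ is parallel in the normal bundle, and then that $\B$ is an honest constant vector of $\bb R^4$, whose orthogonal complement contains $\Sigma$.

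For the forward direction I would argue as follows. First, differentiating $\h=-\tfrac12 x^\perp$ in the normal connection $\n^\perp$ and using $D_X x=X$ together with the Gauss--Weingarten formulas gives the self-shrinker derivative identity $\n^\perp_X\h=\tfrac12 A(x^T,X)$ for every tangent $X$. Since $\pair{A,\B}=0$, the right-hand side lies in $\mathrm{span}\,\N$, hence $\pair{\n^\perp_X\h,\B}=0$. Next, differentiating $\pair{\h,\B}=|\h|\pair{\N,\B}=0$ along $X$ and keeping only normal components yields $0=\pair{\n^\perp_X\h,\B}+\pair{\h,\n^\perp_X\B}=|\h|\,\pair{\N,\n^\perp_X\B}$; dividing by $|\h|>0$ (this is where the non-vanishing hypothesis enters) gives $\pair{\N,\n^\perp_X\B}=0$, and together with $\pair{\B,\n^\perp_X\B}=0$ and the fact that $\{\N,\B\}$ frames $N\Sigma$ this forces $\n^\perp_X\B=0$. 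Then the ambient derivative $D_X\B$ has vanishing normal part (just shown) and vanishing tangential part, since $\pair{D_X\B,Y}=-\pair{\B,A(X,Y)}=0$ for tangent $Y$; hence $\B\equiv b$ is a fixed unit vector of $\bb R^4$. Finally $X\pair{x,b}=\pair{X,b}=0$ along $\Sigma$, so $\pair{x,b}$ is constant on (each component of) $\Sigma$, and $\pair{x,b}=\pair{x^\perp,b}=-2|\h|\pair{\N,\B}=0$, so $\Sigma\sbst b^\perp$, a $3$-dimensional linear subspace.

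For the converse, suppose $\Sigma\sbst W$ for a $3$-plane $W$ with constant unit normal $\nu$. Since $W$ is totally geodesic, the second fundamental form of $\Sigma$ in $\bb R^4$ agrees with that of $\Sigma$ in $W$, so $A$—and hence $\h$—takes values in the hypersurface normal line of $\Sigma\sbst W$; as $\h\neq 0$, $\N$ is $\pm$ that normal, so $\B=J\N$ is the $90^\circ$-rotate of it inside the normal plane $\mathrm{span}(\N,\nu)$, i.e. $\B=\pm\nu$ is constant and orthogonal to $W\supseteq T\Sigma$, whence $\pair{A,\B}=0$.

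I expect the only delicate points to be bookkeeping ones: ensuring that $\N=\h/|\h|$ and $\B=J\N$ are globally defined smooth sections (which follows from $\h$ being nowhere zero and the normal bundle of the surface being oriented), and keeping the tangential/normal splittings straight in the Gauss--Weingarten steps. The single genuinely essential ingredient is the division by $|\h|$ that produces $\n^\perp\B=0$; without it the argument collapses, which is consistent with the hypothesis of the proposition. Everything else is linear algebra in the two-dimensional normal plane together with the shrinker identity for $\n^\perp\h$.
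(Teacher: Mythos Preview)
Your proof is correct and follows precisely the approach the paper indicates: the paper does not give its own proof of this proposition but cites \cite{CM20} and remarks that it ``can be proven via a system of Frenet--Serret type equations and a general derivative equation of $\h$,'' which is exactly what you carry out---the identity $\n^\perp_X\h=\tfrac12 A(x^T,X)$ is the derivative equation, and the passage from $\pair{\n^\perp_X\h,\B}=0$ to $\n^\perp_X\B=0$ via the orthonormal frame $\{\N,\B\}$ is the Frenet--Serret step. The remaining computations (that $D_X\B=0$ in $\bb R^4$ and that $\pair{x,\B}\equiv 0$) are clean and accurate, so there is nothing to add.
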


Using this characterization, we are able to give a rigidity result in terms of the planarity for the self-shrinking surfaces in $\bb R^4.$ 
In fact, we will require a mean curvature lower bound and use the planarity in a large compact set to show that the self-shrinking surface must be round cylinders. In particular, they are of codimension one.

\begin{thm}[Theorem \ref{rigB}]
	Given $\delta<2$ and $\eta>0,$ we can find $R=R(\delta,\eta)$ such that the following statement holds. 
	If $\Sigma$ is a self-shrinking surface in $\bb R^4$ such that $\lambda(\Sigma)\le \delta$ and $\Sigma\cap B_R$ satisfies $|\h|\ge \eta$ and is contained in a $3$-dimensional linear subspace of $\bb R^4,$ then the whole $\Sigma$ is also contained in the same $3$-dimensional subspace.
\end{thm}

\begin{proof}
We will prove the statement by contradiction. 
If the statement were wrong for any $R>0,$ then for any $R>0,$ we can find a self-shrinking surface $\Sigma_R$ in $\bb R^N$  with $\lambda(\Sigma_R)\le \delta$ such that $\Sigma_R\cap B_R$ has $|\h|\ge\eta$ and $\pair{A,\B}=0$ but $\Sigma_R$ is not contained in any $3$-dimensional linear subspace. 
(That is, $\Sigma_R$ is not a hypersurface.) 
Then, Theorem \ref{cpt} implies that there is a sequence $R_i\to\infty$ as $i\to\infty$ such that the sequence $\Sigma_{R_i}$ converges to a non-flat limit $\Sigma$ in the $\Cloc$-sense. 

Using Proposition \ref{plane} again, we know that $\Sigma$ is a hypersurface, and also it satisfies $|\h|\ge\eta.$ 
In particular, it is a mean convex self-shrinking surface of codimension one. 
Therefore, by the classification of mean convex self-shrinker (cf. \cite{H90} and \cite{CM12}), $\Sigma$ must be a round cylinder. 
However, by the rigidity of round cylinders (Theorem \ref{CMrig}), this implies that for $i$ large enough, $\Sigma_{R_i}$ is also a hypersurface, contradicting the fact that $\Sigma_{R_i}$ is not contained in any $3$-plane. 
\end{proof}

\end{document}